\def\NAT@def@citea{\def\@citea{\NAT@separator}}
\theoremstyle{plain}
\newtheorem{theorem}{Theorem}[section]
\newtheorem{lemma}[theorem]{Lemma}
\newtheorem{corollary}[theorem]{Corollary}
\newtheorem{proposition}[theorem]{Proposition}
\theoremstyle{definition}
\newtheorem{example}[theorem]{Example}
\theoremstyle{remark}
\newtheorem{remark}{Remark}
\begin{document}
\fontsize{11pt}{15pt}
	\title{Hidden convexity of quadratic systems and its application to quadratic programming}
	
	\author{
		\name{Nguyen Quang Huy\textsuperscript{a}, Nguyen Huy Hung\textsuperscript{a}, Tran Van Nghi\textsuperscript{a},\\ Hoang Ngoc Tuan\textsuperscript{a}, Nguyen Van Tuyen\textsuperscript{a}\thanks{Contact: Tran Van Nghi, email: tranvannghi@hpu2.edu.vn}}
		\affil{\textsuperscript{a}Department of Mathematics, Hanoi Pedagogical University 2,\\
		\;  Xuan Hoa, Phu Tho, Vietnam}
	}
	
	\maketitle
	
	\begin{abstract}
		In this paper, we present sufficient conditions ensuring that the sum of the image of quadratic functions and the nonnegative orthant
		is convex.
		The hidden convexity of the trust-region problem with linear inequality constraints
		is established under a newly proposed assumption, which is compared with the previous one in
		[{\it Math. Program. 147, 171--206, 2014}].
		We also provide a complete proof of the hidden convexity of a system of two quadratic functions
		in [{\it J. Glob. Optim. 56, 1045--1072, 2013}].
		Furthermore, necessary and sufficient conditions for the S-lemma concerning systems of quadratic inequalities
		are investigated.
		Finally, we derive necessary and sufficient global optimality conditions and strong duality results
		for quadratic programming.
	\end{abstract}
	
	\begin{keywords}
		Quadratic programming; 
		hidden convexity;  S-lemma;  global optimality; strong duality
	\end{keywords}
	
	{Mathematics Subject Classification: 90C20, 
		90C26, 90C30, 90C46}
	
	\section{Introduction}
	Quadratic programming plays a fundamental role in a wide range of applications
	and frequently appears as a subproblem in numerous optimization algorithms.
	Many real-world problems, such as those arising in planning and scheduling,
	economies of scale, engineering design, and control, can be naturally formulated
	within the framework of quadratic programming
	(see \cite{FloudasVisweswaran1995,LTY05book,Tuy2016}).
	
	The classical S-lemma constitutes a powerful analytical tool, particularly in control theory
	and, more specifically, in the stability analysis of nonlinear systems
	(see \cite{PolikTerlaky2007}).
	Over its more than eighty-year development, the S-lemma has been established and generalized
	through a variety of approaches.
	It has also found significant applications in quadratic and semidefinite optimization.
	Within the context of nonconvex optimization, the S-lemma often uncovers hidden convexity
	in otherwise nonconvex problems.
	When applied to quadratic optimization, it yields strong duality conditions
	for problems involving quadratic constraints
	(see \cite{Tuy2016,BeckEldar2006,TuyTuan2013,JeyakumarHuyLi2009,JeyakumarLi2014,PolikTerlaky2007}).
	
	Let $f$ and $g_i$, $i=1,\ldots,m$, be quadratic functions on $\mathbb{R}^n$.
	It is well known that the generalized S-lemma for quadratic functions
	$f, g_1,\ldots,g_m$ with $m\geq 2$ holds if Slater's condition is satisfied
	and if the set ${\rm U}(f,g_1,\ldots,g_m)$ is convex, where
	$${\rm U}(f,g_1,\ldots,g_m):=
	\{(f(x), g_1(x), \ldots, g_m(x)) : x\in \mathbb{R}^n\} + \mathbb{R}^{m+1}_+$$
	(see, for instance, \cite{Fradkov1973,LaraUrruty2022}).
	This approach enables us to derive the generalized S-lemma and many of its extensions
	in a stronger form and in a unified and more direct manner than existing results.
	Consequently, the study of the convexity of the set ${\rm U}(f,g_1,\ldots,g_m)$
	plays a central role in quadratic programming.
	In particular, it provides a more effective framework for investigating
	global optimality conditions and strong duality in nonconvex quadratic optimization.

	Furthermore, the convexity of ${\rm U}(f,g_1,\ldots,g_m)$ is equivalent to the property that the function
	$$(x, \gamma_0, \gamma_1,\ldots, \gamma_m)\mapsto \gamma_0 f(x)+\sum_{i=1}^{m}\gamma_i g_i(x)$$
	is convexlike with respect to $x\in \mathbb{R}^n$ for every
	$(\gamma_0, \gamma_1,\ldots,\gamma_m)\in \mathbb{R}^{m+1}$
	(see \cite{Fan1953,Roubi2022}).
	As a consequence, the minimax inequality and the generalized Yuan's lemma hold
	(see \cite{Roubi2022}). 
	
	It is clear that the hidden convexity assumption for quadratic systems is strictly weaker
	than the convexity of the image itself, which has been investigated in
	\cite{Dines1941,Polyak1998,BazanOpazo2016,QuangChuSheu2022,BazanOpazo2021}.

	For $m=1$, Polyak \cite{Polyak1998} proved the convexity of ${\rm U}(f,g_1)$ under the assumption that $g_1$
	is a strictly convex quadratic function.
	Subsequently, Tuy and Tuan \cite{TuyTuan2013,Tuy2016} showed that ${\rm U}(f,g_1)$ is convex without imposing any additional conditions.
	More recently, this important result was also established in \cite{BazanOpazo2016}.
	
	For the case $m\geq 2$, the authors of \cite[Example 2.1]{JeyakumarLi2014} constructed an example in which the set
	${\rm U}(f,g_1,g_2)$ is not convex.
	This demonstrates that hidden convexity does not generally hold for systems of quadratic inequalities.
	A natural question therefore arises:
	
	``{\it Under what assumptions is the set ${\rm U}(f,g_1,\ldots,g_m)$ convex}?''
	
	Under the assumptions that $f$ is an arbitrary quadratic function, $g_1$ is a strictly convex quadratic function,
	all remaining functions $g_2,\ldots,g_m$ are affine, and a suitable dimension condition
	(see \cite{JeyakumarLi2014}) is satisfied, the set ${\rm U}(f,g_1,\ldots,g_m)$ is convex.
	In \cite{BomzeJeyakumarLi2018}, the authors investigated the CDT (Celis--Dennis--Tapia, or two-ball trust-region) problem
	and proposed an extended dimension condition ensuring the convexity of ${\rm U}(f,g_1,\ldots,g_m)$.
	Another convexity result, under the assumption that the associated matrices are $Z$-matrices,
	can be found in \cite{JeyakumarLeeLi2009}.
	However, the assumptions employed to guarantee hidden convexity in these works are rather restrictive.
	Therefore, it is both necessary and important to establish weaker and easily verifiable assumptions
	for the study of quadratic programming.

	In this paper, we provide a partial answer to the above open question.
	Our main contributions to quadratic programming under the newly proposed assumptions
	can be summarized as follows:
	
	$\bullet$ Sufficient conditions guaranteeing the convexity of
	${\rm U}(f,g_1,g_2,\ldots,g_m)$ are established under the proposed assumptions;
	
	$\bullet$ The hidden convexity of the trust-region problem with linear inequality constraints
	is investigated, and the proposed assumptions are compared with existing ones in
	\cite{BomzeJeyakumarLi2018,JeyakumarLi2014,Polyak1998};
	
	$\bullet$ A complete proof of the hidden convexity of a system of two quadratic functions
	is provided;
	
	$\bullet$ Necessary and sufficient conditions for the S-lemma associated with systems of quadratic inequalities
	are studied;
	
	$\bullet$ Necessary and sufficient global optimality conditions and strong duality results
	for quadratically constrained quadratic optimization problems are derived under the proposed assumptions,
	and these results are compared with existing ones in
	\cite{BomzeJeyakumarLi2018,JeyakumarLi2014,Tuy2016,TuyTuan2013}.
	
	Our main tools rely on properties of quadratic functions and recession cones.
	Several of our assumptions are weaker than those commonly used in the existing literature.
	In addition, we present numerical examples that not only illustrate the theoretical results
	but also demonstrate situations in which the existing results are not applicable.
	
	The remainder of the paper is organized as follows.
	Section~2 introduces the problem formulation and preliminary results.
	In Section~3, we establish hidden convexity results for quadratic systems
	and investigate the hidden convexity of the trust-region problem with linear inequality constraints,
	including a comparison with previous assumptions.
	Section~4 is devoted to a complete proof of the hidden convexity of a system of two quadratic functions.
	Finally, in the last section, we study necessary and sufficient conditions for the S-lemma
	associated with systems of quadratic inequalities, as well as strong duality results.

	\section{ Notation and preliminaries} 
	
	For any positive integer $n$, $\mathbb{R}^n$ denotes the $n$-dimensional real Euclidean space
	equipped with the scalar product $\langle \cdot,\cdot \rangle$ and the induced norm $\|\cdot\|$.
	Let $\mathbb{R}^{n\times n}_{S}$ denote the space of real symmetric $(n\times n)$ matrices
	endowed with the matrix norm induced by the vector norm in $\mathbb{R}^n$.
	Let $\mathbb{R}^{n \times n}_{S^{+}}$
	denote the set of positive semidefinite real symmetric $(n\times n)$ matrices.
	
	The scalar product of vectors $x$ and $y$ and the Euclidean norm of a vector $x$
	in a finite-dimensional Euclidean space are denoted, respectively, by $x^T y$
	(or $\langle x,y\rangle$) and $\|x\|$, where the superscript $^T$ denotes transposition.
	Vectors in Euclidean spaces are interpreted as columns of real numbers.
	The notation $x \ge y$ (resp., $x > y$) means that every component of $x$
	is greater than or equal to (resp., greater than) the corresponding component of $y$.

	Let $S\subset \mathbb{R}^n$ be a nonempty closed convex set.
	The recession cone of $S$ is defined in \cite[p.~61]{Rockafellar1970} by
	$$
	S^\infty:=\{v\in \mathbb{R}^n : x+tv\in S \ \forall x\in S,\ \forall t\geq 0\}.
	$$
	It follows directly from the definition that $S+S^\infty\subset S$.
	Clearly, $S\subset S+S^\infty$ since $0\in S^\infty$.
	Therefore, $S=S+S^\infty$.
	According to \cite[Theorem~8.3]{Rockafellar1970}, one also has
	$$
	S^\infty:=\{v\in \mathbb{R}^n : \exists x\in S \text{ such that } x+tv\in S \ \forall t\geq 0\}.
	$$
	The recession cone of a set defined by convex quadratic inequalities can be characterized as follows.
	
	\begin{lemma}\label{lemmanlx} {\rm (see \cite{BK02})}
		Let
		\begin{equation}\label{rb}
			{\cal F}:=\big\{x\in \mathbb{R}^n:
			\dfrac{1}{2}x^TQ_ix+q_i^Tx+c_i \leq 0,\ i=1,\ldots,m\big\},
		\end{equation}
		where
		$Q_i \in\mathbb{R}^{n \times n}_{S^{+}}$, $q_i\in \mathbb{R}^n$, and $c_i\in \mathbb{R}$
		for every $i=1,\ldots,m$.
		If ${\cal F}$ is nonempty, then
		\begin{equation}\label{c1a}
			{\cal F}^\infty
			=\big\{v\in \mathbb{R}^n : Q_iv=0,\ q_i^Tv \leq 0,\ i=1,\ldots,m\big\}.
		\end{equation}
	\end{lemma}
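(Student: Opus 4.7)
\noindent\textbf{Proof proposal for Lemma \ref{lemmanlx}.}

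My plan is to prove the two inclusions separately, exploiting the characterization of $\mathcal{F}^\infty$ via a single base point recorded in the excerpt (i.e.\ $v\in\mathcal{F}^\infty$ iff there exists some $x\in\mathcal{F}$ with $x+tv\in\mathcal{F}$ for all $t\geq 0$). Note first that $\mathcal{F}$ is closed and convex, since each defining function $x\mapsto \tfrac{1}{2}x^T Q_i x+q_i^T x+c_i$ is convex (because $Q_i\succeq 0$) and continuous, so $\mathcal{F}^\infty$ is well defined and the characterization above applies.

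For the inclusion ``$\supseteq$'', I would fix $v$ in the right-hand side of \eqref{c1a} and any $x\in\mathcal{F}$, expand
\[
\tfrac{1}{2}(x+tv)^T Q_i(x+tv)+q_i^T(x+tv)+c_i
=\tfrac{1}{2}x^T Q_i x+q_i^T x+c_i+t\bigl(x^T Q_i v+q_i^T v\bigr)+\tfrac{t^2}{2}v^T Q_i v,
\]
and observe that $Q_i v=0$ kills both the $t$-coefficient contribution $x^T Q_i v$ and the $t^2$-term, leaving $\tfrac{1}{2}x^T Q_i x+q_i^T x+c_i+t\,q_i^T v\leq 0$ for every $t\geq 0$, thanks to $x\in\mathcal{F}$ and $q_i^T v\leq 0$. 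Hence $x+tv\in\mathcal{F}$ for all $t\geq 0$, which gives $v\in\mathcal{F}^\infty$.

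For the nontrivial inclusion ``$\subseteq$'', take $v\in\mathcal{F}^\infty$ and, by the recession-cone characterization, pick $x\in\mathcal{F}$ with $x+tv\in\mathcal{F}$ for all $t\geq 0$. Using the same expansion, I would divide the $i$-th defining inequality by $t^2$ and let $t\to\infty$ to obtain $v^T Q_i v\leq 0$; combined with $Q_i\succeq 0$ this forces $v^T Q_i v=0$. The key algebraic step (and the only place where one must be careful) is to upgrade $v^T Q_i v=0$ to $Q_i v=0$: I will use a Cholesky-type factorization $Q_i=L_i^T L_i$, which shows $\|L_i v\|^2=0$, hence $L_i v=0$ and therefore $Q_i v=0$. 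Once $Q_i v=0$ is established, the expansion collapses to
\[
\tfrac{1}{2}x^T Q_i x+q_i^T x+c_i+t\,q_i^T v\leq 0\qquad\text{for all }t\geq 0,
\]
and dividing by $t$ and sending $t\to\infty$ yields $q_i^T v\leq 0$. This gives both required conditions for each $i$, completing the proof.

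The only subtle point is the passage from the quadratic-form equality $v^T Q_i v=0$ to the vector equality $Q_i v=0$; the rest is a routine separation of the three terms of degree $0$, $1$, $2$ in $t$ and their asymptotic analysis as $t\to\infty$.
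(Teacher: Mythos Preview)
Your argument is correct and is the standard way to establish this recession-cone formula. Note, however, that the paper itself does not supply a proof of Lemma~\ref{lemmanlx}: the result is simply quoted from \cite{BK02}, so there is no ``paper's own proof'' to compare against. Your write-up would serve perfectly well as a self-contained justification, and the one step you flag as delicate---upgrading $v^T Q_i v=0$ to $Q_i v=0$ via a factorization $Q_i=L_i^T L_i$---is handled cleanly.
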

	
	In this paper, we fix
	$$
	f(x):=x^TAx+a^Tx+\alpha
	$$
	and
	$$
	g_i(x):=x^TB_ix+b_i^Tx+\beta_i,\quad i=1,\ldots,m,
	$$
	where
	$A\in \mathbb{R}^{n \times n}$,
	$B_i \in\mathbb{R}^{n \times n}_{S^{+}}$,
	$a,b_i\in \mathbb{R}^n$, and $\alpha, \beta_i\in \mathbb{R}$
	for every $i=1,\ldots,m$.
	
	We denote
	$$
	[h\leq 0]:=\{x\in \mathbb{R}^n : h(x)\leq 0\}.
	$$
	
	Let $h:\mathbb{R}^n \rightarrow \mathbb{R}\cup\{-\infty,+\infty\}$.
	The conjugate function of $h$,
	$$
	h^*:\mathbb{R}^n \rightarrow \mathbb{R}\cup\{+\infty\},
	$$
	is defined by
	$$
	h^*(v):=\sup\{v(x)-h(x): x\in {\rm dom}\, h\},
	$$
	where
	$$
	{\rm dom}\, h:=\{x\in \mathbb{R}^n : h(x)<+\infty\}
	$$
	is the effective domain of $h$ and $v(x)=v^Tx$.
	The function $h$ is said to be proper if it never takes the value $-\infty$
	and ${\rm dom}\, h\neq \emptyset$.
	The epigraph of $h$ is defined by
	$$
	{\rm epi}\, h:=\{(x,r)\in \mathbb{R}^n\times \mathbb{R} :
	x\in {\rm dom}\, h,\ h(x)\leq r\}.
	$$
	
	If $h$ is a continuously differentiable convex function on $\mathbb{R}^n$
	and $\bigcup_{\lambda\geq 0} {\rm epi}\, (\lambda h)^*$ is closed, then for each
	$x\in [h\leq 0]$ (see \cite{JeyakumarLee2008}), one has
	\begin{equation}\label{h4}
		N_{[h\leq 0]}(x)
		=\bigcup_{\lambda\geq 0,\ \lambda h(x)=0} \{\lambda \nabla h(x)\}.
	\end{equation}
	
	A matrix $M\in \mathbb{R}^{n \times n}_{S}$ is said to be positive semidefinite
	(positive definite, respectively), denoted by $M\succeq 0$ ($M\succ 0$, resp.),
	if $x^TMx\geq 0$ ($x^TMx>0$, resp.) for all $x\in \mathbb{R}^n$.

	\section{Main results}

	In the following main results, we present sufficient conditions for the convexity of the set
	${\rm U}(f,g_1,g_2,\ldots,g_m)$, which play a key role in our study of the S-lemma
	and strong duality for quadratic programming.
	\begin{theorem}\label{Theo3} 
		Suppose that there exist $i_0\in I$ and $\lambda\in \mathbb{R}$ such that $B_{i_0}\succ 0$ and
		\begin{description}
			\item[${\rm (H_1)}$] $A+\lambda B_{i_0}\succeq 0$;
			\item[${\rm (H_2)}$] $[f+\lambda g_{i_0}\leq 0,\ g_i\leq 0,\ i\in I_0]^\infty \neq \{0\}$,
			where $I_0:=I\setminus \{i_0\}$.
		\end{description}	 
		Then the set
		$$
		{\rm U}(f,g_1,\ldots,g_m)
		=\{(f(x), g_1(x), \ldots, g_m(x)) : x\in \mathbb{R}^n\}+\mathbb{R}^{m+1}_+
		$$
		is convex.                    
	\end{theorem}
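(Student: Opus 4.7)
The plan is to take arbitrary $u^{(0)},u^{(1)}\in \mathrm{U}(f,g_1,\ldots,g_m)$ and $t\in[0,1]$, write $u^{(j)}=(f(x^{(j)}),g_1(x^{(j)}),\ldots,g_m(x^{(j)}))+r^{(j)}$ with $r^{(j)}\ge 0$, and exhibit some $z\in\mathbb{R}^n$ witnessing $tu^{(0)}+(1-t)u^{(1)}\in\mathrm{U}(f,g_1,\ldots,g_m)$. Setting $w_0:=tf(x^{(0)})+(1-t)f(x^{(1)})+tr_0^{(0)}+(1-t)r_0^{(1)}$ and analogously $w_i$ using $g_i$ in place of $f$, the goal reduces to finding $z$ with $f(z)\le w_0$ and $g_i(z)\le w_i$ for $i=1,\ldots,m$. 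Since each $g_i$ is convex, the midpoint $z_0:=tx^{(0)}+(1-t)x^{(1)}$ automatically satisfies $g_i(z_0)\le w_i$, so the only obstruction is to control $f$. I would split on the sign of $\lambda$.

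When $\lambda\le 0$, the decomposition $A=(A+\lambda B_{i_0})+(-\lambda)B_{i_0}$ combined with ${\rm (H_1)}$ and $B_{i_0}\succeq 0$ gives $A\succeq 0$, so $f$ itself is convex and $z=z_0$ works; in this branch ${\rm (H_2)}$ is not needed. The substantial case is $\lambda>0$. Here I would invoke Lemma~\ref{lemmanlx} applied to the convex quadratic system $\{f+\lambda g_{i_0}\le 0,\ g_i\le 0\ (i\in I_0)\}$, whose recession cone is nontrivial by ${\rm (H_2)}$, to extract a nonzero $v\in\mathbb{R}^n$ with $(A+\lambda B_{i_0})v=0$, $B_iv=0$ and $b_i^Tv\le 0$ for every $i\in I_0$, and $(a+\lambda b_{i_0})^Tv\le 0$. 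Along the ray $z(s):=z_0+sv$, $s\ge 0$, these identities collapse the quadratic terms and yield $g_i(z(s))=g_i(z_0)+s\,b_i^Tv\le w_i$ for $i\in I_0$, together with $(f+\lambda g_{i_0})(z(s))=(f+\lambda g_{i_0})(z_0)+s(a+\lambda b_{i_0})^Tv\le w_0+\lambda w_{i_0}$, where the last bound uses convexity of $f+\lambda g_{i_0}$ at $z_0$ and $\lambda\ge 0$. Meanwhile $B_{i_0}\succ 0$ forces $v^TB_{i_0}v>0$, so $s\mapsto g_{i_0}(z(s))$ is a strictly convex univariate quadratic with $g_{i_0}(z(0))\le w_{i_0}$ (by convexity of $g_{i_0}$) and $g_{i_0}(z(s))\to+\infty$ as $s\to\infty$.

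The concluding step, and the crux of the argument, is to invoke the intermediate value theorem to pick $s_1\ge 0$ with $g_{i_0}(z(s_1))=w_{i_0}$; substituting into the bound on $f+\lambda g_{i_0}$ gives $f(z(s_1))\le w_0+\lambda\bigl(w_{i_0}-g_{i_0}(z(s_1))\bigr)=w_0$, while the remaining inequalities $g_i(z(s_1))\le w_i$ already hold. The main obstacle is engineering this single point $z(s_1)$: the four properties of $v$ supplied by the recession cone must work in concert, simultaneously keeping $g_i$ ($i\in I_0$) and $f+\lambda g_{i_0}$ monotone nonincreasing in $s$, while the strict positivity $v^TB_{i_0}v>0$ (available precisely because $B_{i_0}\succ 0$) lets $g_{i_0}(z(s))$ climb through the target $w_{i_0}$, at which moment the surplus $\lambda(w_{i_0}-g_{i_0}(z(s_1)))$ in the bound on $f+\lambda g_{i_0}$ is exactly absorbed.
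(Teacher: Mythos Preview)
Your proof is correct and rests on the same geometric engine as the paper's: pick a nonzero direction $v$ in the recession cone of $\{f+\lambda g_{i_0}\le 0,\ g_i\le 0,\ i\in I_0\}$, slide along $z_0+sv$ so that $f+\lambda g_{i_0}$ and the $g_i$ ($i\in I_0$) do not increase, and stop at the moment $g_{i_0}$ hits its target level, which is reachable because $v^TB_{i_0}v>0$.

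The packaging, however, is different. The paper recasts $\mathrm{U}(f,g_1,\ldots,g_m)$ as the epigraph of the optimal value function $\varphi(\omega)=\inf\{f(x):g_i(x)\le\omega_i\}$, uses the ray argument to force a minimizer of the auxiliary convex problem $\min\{F:g_i\le\omega_i\}$ (with $F=f+\lambda g_{i_0}$) to sit on the boundary $g_{i_0}=\omega_{i_0}$, and then deduces $\varphi(\omega)=\min\{F:g_i\le\omega_i\}-\lambda\omega_{i_0}$, which is convex as an affine shift of a convex value function. Your argument is more elementary: you verify convexity directly by exhibiting a witness for an arbitrary convex combination, so you never need existence of minimizers or the convexity-of-value-function lemma. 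Your explicit case split on the sign of $\lambda$ is also a small improvement: the paper's chain of inequalities establishing $\varphi(\omega)=\min F-\lambda\omega_{i_0}$ silently uses $\lambda\ge 0$ in the step $\inf_{\Phi(\omega)}\{f+\lambda\omega_{i_0}\}\ge\inf_{\Phi(\omega)}\{f+\lambda g_{i_0}\}$, whereas you dispose of $\lambda\le 0$ cleanly by noting that then $A\succeq 0$ and $z_0$ already works.
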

	
	\begin{proof}
		Let
		$$
		F(x):=f(x)+\lambda g_{i_0}(x).
		$$
		For each $\omega\in \mathbb{R}^m$, consider the quadratic programming problem
		\begin{equation*}\tag{$GP_\omega$}
			\min \{F(x): x\in \mathbb{R}^n,\ g_i(x)\leq \omega_i,\ i\in I\}.
		\end{equation*}
		
		First, we show that there exists a solution $\bar x$ of the problem $(GP_\omega)$ such that
		$g_{i_0}(\bar x)=\omega_{i_0}$.
		Indeed, let $\bar x$ be any solution of $(GP_\omega)$.
		If $g_{i_0}(\bar x)<\omega_{i_0}$, then by assumption ${\rm (H_2)}$ there exists
		$$
		0\neq \bar v\in [f+\lambda g_{i_0}\leq 0,\ g_i\leq 0,\ i\in I_0]^\infty.
		$$
		That is,
		$$
		(A+\lambda B_{i_0})\bar v=0,\quad
		(a+\lambda b_{i_0})^T\bar v\leq 0,\quad
		B_i\bar v=0,\quad
		b_i^T\bar v\leq 0 \quad \forall i\in I_0.
		$$
		Define $x(t):=\bar x+t\bar v$ for $t>0$.
		Since $B_{i_0}\succ 0$, we have $\bar v^T B_{i_0}\bar v>0$.
		Together with the inequality $g_{i_0}(\bar x)<\omega_{i_0}$, this implies that there exists
		$t_0>0$ such that
		\begin{equation}\label{c1}
			g_{i_0}(x(t_0))
			=g_{i_0}(\bar x)
			+t_0^2\bar v^TB_{i_0}\bar v
			+t_0(2B_{i_0}\bar x+b_{i_0})^T\bar v
			=\omega_{i_0}.
		\end{equation}
		Moreover, since $\bar v\in [g_i\leq 0,\ i\in I_0]^\infty$, we have
		$B_i\bar v=0$ and $b_i^T\bar v\leq 0$ for all $i\in I_0$.
		Hence, for each $i\in I_0$,
		\begin{equation}\label{c2}
			g_i(x(t_0))
			=g_i(\bar x)
			+t_0^2\bar v^TB_i\bar v
			+t_0(2B_i\bar x+b_i)^T\bar v
			\leq g_i(\bar x)
			\leq \omega_i.
		\end{equation}
		Combining \eqref{c1} and \eqref{c2}, we obtain
		\begin{equation}\label{c3}
			x(t_0)\in \{x\in \mathbb{R}^n : g_i(x)\leq \omega_i,\ i\in I\}.
		\end{equation}
		Furthermore, by the fact that $\bar v\in [f+\lambda g_{i_0}\leq 0]^\infty$, we have
		$(A+\lambda B_{i_0})\bar v=0$ and $(a+\lambda b_{i_0})^T\bar v\leq 0$.
		Consequently,
		\begin{align*}
			F(x(t_0))
			&=F(\bar x)
			+t_0^2\bar v^T(A+\lambda B_{i_0})\bar v
			+t_0\big(2(A+\lambda B_{i_0})\bar x+a+\lambda b_{i_0}\big)^T\bar v \\
			&\leq F(\bar x).
		\end{align*}
		Together with \eqref{c3}, this shows that $x(t_0)$ is also a solution of $(GP_\omega)$
		satisfying $g_{i_0}(x(t_0))=\omega_{i_0}$ and $g_i(x(t_0))\leq \omega_i$ for all $i\in I_0$.
		Thus, we may assume without loss of generality that there exists a solution $\bar x$ of
		$(GP_\omega)$ such that $g_{i_0}(\bar x)=\omega_{i_0}$.
		
		Next, consider the problem
		\begin{equation}\tag{$P_\omega$}
			\inf\{f(x): g_i(x)\leq \omega_i,\ i\in I\}.
		\end{equation}
		Let $\varphi(\omega)$ denote the optimal value of $(P_\omega)$.
		Define the feasible set
		$$
		\Phi(\omega):=\{x\in \mathbb{R}^n : g_i(x)\leq \omega_i,\ i\in I\}.
		$$
		If $\Phi(\omega)=\emptyset$, set $\varphi(\omega)=+\infty$; otherwise,
		$\varphi(\omega)=\inf\{f(x): x\in \Phi(\omega)\}$.
		Then
		$$
		{\rm U}(f,g_1,\ldots,g_m)={\rm epi}\,\varphi.
		$$
		
		We now show that $\varphi$ is convex.
		Indeed, we have
		\begin{align*}
			&\inf\{F(x): x\in \mathbb{R}^n,\ g_i(x)\leq \omega_i,\ i\in I\} \\
			=&\, f(\bar x)+\lambda g_{i_0}(\bar x) \\
			\geq&\, \inf\{f(x): x\in \mathbb{R}^n,\ g_i(x)\leq \omega_i,\ i\in I\}
			+\lambda \omega_{i_0} \\
			=&\, \inf\{f(x)+\lambda \omega_{i_0}: x\in \mathbb{R}^n,\ g_i(x)\leq \omega_i,\ i\in I\} \\
			\geq&\, \inf\{f(x)+\lambda g_{i_0}(x): x\in \mathbb{R}^n,\ g_i(x)\leq \omega_i,\ i\in I\} \\
			=&\, \inf\{F(x): x\in \mathbb{R}^n,\ g_i(x)\leq \omega_i,\ i\in I\}.
		\end{align*}
		Hence,
		\begin{align}
			\inf\{f(x): x\in \mathbb{R}^n,\ g_i(x)\leq \omega_i,\ i\in I\}
			=\inf\{F(x): x\in \mathbb{R}^n,\ g_i(x)\leq \omega_i,\ i\in I\}
			-\lambda \omega_{i_0}.
			\label{e1}
		\end{align}
		
		Since assumption ${\rm (H_1)}$ implies that $F$ is convex,
		it follows from \cite[Lemma~4.24]{Ruszczynski2006} that the optimal value function of $(GP_\omega)$
		is convex.
		Combining this with \eqref{e1}, we conclude that $\varphi$ is convex.
		Therefore, ${\rm U}(f,g_1,\ldots,g_m)={\rm epi}\,\varphi$ is convex.
	\end{proof}

	As an application of Theorem~\ref{Theo3}, we consider the following trust-region problem
	with linear inequality constraints:
	\begin{equation}\tag{$TP$}
		\inf\{f(x):=x^TAx+a^Tx : h_1(x)\leq 0,\ h_i(x)\leq 0,\ i=2,\ldots,m\},
	\end{equation}
	where $h_1(x):=\|x-x_0\|^2-\alpha$, $h_i(x):=b_i^Tx-\beta_i$,
	$A$ is a symmetric matrix,
	$a,b_i,x_0\in \mathbb{R}^n$,
	$\alpha,\beta_i\in \mathbb{R}$ with $\alpha>0$, and $i=2,\ldots,m$.
	
	Let $\bar\lambda$ denote the smallest eigenvalue of $A$ and consider the following condition:
	\begin{description}
		\item[${\rm (H_3)}$]
		The system
		\[
		Av-\bar\lambda v=0,\quad
		(a+2\bar\lambda x_0)^Tv\leq 0,\quad
		b_i^Tv\leq 0,\ i=2,\ldots,m,
		\]
		has a nonzero solution.
	\end{description}
	Applying Theorem~\ref{Theo3} to the problem $(TP)$, we obtain the following results.
	\begin{proposition}\label{Theo4}
		Consider the problem $(TP)$.
		If assumption ${\rm (H_3)}$ holds, then conditions ${\rm (H_1)}$ and ${\rm (H_2)}$
		are satisfied, and the set ${\rm U}(f,h_1,\ldots,h_m)$ is convex.
	\end{proposition}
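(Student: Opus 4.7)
The plan is to apply Theorem~\ref{Theo3} to the trust-region problem $(TP)$ with the choices $i_0 := 1$ (indexing the ball constraint $h_1$) and $\lambda := -\bar\lambda$. Writing $h_1(x) = \|x - x_0\|^2 - \alpha$ in the normal form $x^T B_1 x + b_1^T x + \beta_1$ identifies $B_{i_0} = B_1 = I \succ 0$, $b_1 = -2x_0$, $\beta_1 = \|x_0\|^2 - \alpha$, while each $h_i$ with $i \in I_0 := \{2, \ldots, m\}$ is affine and so has $B_i = 0$. The required strict-positivity of $B_{i_0}$ is therefore automatic.

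The first step is to verify $(H_1)$: since $\bar\lambda$ is the smallest eigenvalue of the symmetric matrix $A$, every eigenvalue of $A - \bar\lambda I$ is nonnegative, whence
\[
A + \lambda B_{i_0} = A - \bar\lambda I \succeq 0.
\]
The second step is to verify $(H_2)$. By $(H_1)$ the function $F := f + \lambda h_1$ is convex, and each $h_i$ with $i \in I_0$ is affine, so Lemma~\ref{lemmanlx} applies to $\mathcal{F} := [F \le 0,\, h_i \le 0,\, i \in I_0]$ and yields
\[
\mathcal{F}^\infty = \{ v \in \mathbb{R}^n : (A - \bar\lambda I) v = 0,\ (a + 2\bar\lambda x_0)^T v \le 0,\ b_i^T v \le 0,\ i \in I_0 \},
\]
once one notes that the condition $B_i v = 0$ is vacuous for the affine $h_i$ and the coefficient $a + \lambda b_1$ collapses to $a + 2\bar\lambda x_0$. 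This system is precisely the one appearing in $(H_3)$, so the nonzero solution guaranteed by $(H_3)$ supplies a nonzero element of $\mathcal{F}^\infty$, confirming $(H_2)$. Having verified $(H_1)$ and $(H_2)$, Theorem~\ref{Theo3} immediately delivers the convexity of ${\rm U}(f, h_1, \ldots, h_m)$.

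I expect the only delicate point to be the tacit nonemptiness of $\mathcal{F}$ required by Lemma~\ref{lemmanlx}. I would address this using the direction $v$ from $(H_3)$ itself: since $(A - \bar\lambda I) v = 0$ kills the quadratic part of $F$ along $v$, we have
\[
F(x + t v) = F(x) + t (a + 2\bar\lambda x_0)^T v, \qquad h_i(x + t v) = h_i(x) + t b_i^T v \quad (i \in I_0),
\]
for every $x$ and $t \ge 0$. In the generic case $(a + 2\bar\lambda x_0)^T v < 0$, starting from any point of the affine halfspace system $\{h_i \le 0 : i \in I_0\}$ and flowing far along $v$ drives $F$ below zero without violating the linear inequalities, exhibiting a feasible point. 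The borderline equality case is absorbed by observing that the proof of Theorem~\ref{Theo3} uses $(H_2)$ only to extract the linear system displayed above, whose solvability is exactly what $(H_3)$ asserts.
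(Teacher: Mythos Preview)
Your proposal is correct and follows exactly the paper's route: choose $i_0=1$, $\lambda=-\bar\lambda$, identify $B_1=I_n\succ0$, verify $(H_1)$ from the definition of $\bar\lambda$, rewrite the recession cone of $[f-\bar\lambda h_1\le 0,\ h_i\le 0,\ i\in I_0]$ via Lemma~\ref{lemmanlx} as the linear system in $(H_3)$, and invoke Theorem~\ref{Theo3}. Your closing remark---that the proof of Theorem~\ref{Theo3} uses $(H_2)$ only through the linear system, which $(H_3)$ supplies directly---is in fact the cleanest way to dispose of the nonemptiness hypothesis of Lemma~\ref{lemmanlx}; the paper simply writes down the recession-cone identity without addressing this point, so your treatment is slightly more careful than the original.
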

	
	\begin{proof}
		Assume that ${\rm (H_3)}$ holds.
		Let $I_n$ denote the $n\times n$ identity matrix.
		Then condition ${\rm (H_1)}$ is satisfied with $\lambda=-\bar\lambda$,
		$i_0=1$, and $B_1=I_n$.
		To verify condition ${\rm (H_2)}$, note that
		$$h_1(x)=x^TI_nx-2x_0^Tx+\|x_0\|^2-\alpha$$
		and
		$$f(x)-\bar\lambda h_1(x)
		=x^T(A-\bar\lambda I_n)x+(a+2\bar\lambda x_0)^Tx
		-\bar\lambda\|x_0\|^2+\bar\lambda\alpha.$$
		It follows that
		\begin{align*}
			&[f-\bar\lambda h_1\leq 0,\ h_i\leq 0,\ i=2,\ldots,m]^\infty \\
			=&\{v\in \mathbb{R}^n :
			Av-\bar\lambda v=0,\ (a+2\bar\lambda x_0)^Tv\leq 0,\
			b_i^Tv\leq 0,\ i=2,\ldots,m\}.
		\end{align*}
		By assumption ${\rm (H_3)}$, the above recession cone is nontrivial, that is,
		$$
		[f-\bar\lambda h_1\leq 0,\ h_i\leq 0,\ i=2,\ldots,m]^\infty \neq \{0\}.
		$$
		Therefore, condition ${\rm (H_2)}$ is satisfied.
		Applying Theorem~\ref{Theo3}, we conclude that the set
		${\rm U}(f,h_1,\ldots,h_m)$ is convex.
	\end{proof}

	\begin{proposition}\label{Theo6} 
		Consider the problem $(TP)$.
		If the following assumption holds:
		\begin{description}
			\item[${\rm (H_4)}$]
			the system
			\[
			Av-\bar\lambda v=0,\quad b_i^Tv=0,\ i=2,\ldots,m
			\]
			has a nonzero solution,
		\end{description}
		then assumption ${\rm (H_3)}$ is satisfied and the set
		${\rm U}(f,h_1,\ldots,h_m)$ is convex.        
	\end{proposition}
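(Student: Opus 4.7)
The plan is to reduce Proposition \ref{Theo6} to Proposition \ref{Theo4} by showing that assumption ${\rm (H_4)}$ directly implies assumption ${\rm (H_3)}$; once that implication is in hand, the convexity of ${\rm U}(f,h_1,\ldots,h_m)$ follows immediately from Proposition \ref{Theo4}.

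To establish ${\rm (H_4)} \Rightarrow {\rm (H_3)}$, I would start from a nonzero vector $v \in \mathbb{R}^n$ satisfying $Av - \bar\lambda v = 0$ and $b_i^T v = 0$ for $i=2,\ldots,m$. The key observation is that the conditions in ${\rm (H_4)}$ are invariant under the sign change $v \mapsto -v$, since both $v \mapsto (A - \bar\lambda I_n)v$ and $v \mapsto b_i^T v$ are linear. I would then set $v' := v$ if $(a + 2\bar\lambda x_0)^T v \leq 0$, and $v' := -v$ otherwise; in either case, $(a + 2\bar\lambda x_0)^T v' \leq 0$. Combined with $A v' - \bar\lambda v' = 0$ and $b_i^T v' = 0 \leq 0$ for $i = 2, \ldots, m$, this shows that $v'$ is a nonzero witness for ${\rm (H_3)}$.

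With ${\rm (H_3)}$ verified, I would invoke Proposition \ref{Theo4} to conclude both that ${\rm (H_1)}$ and ${\rm (H_2)}$ hold (with $\lambda = -\bar\lambda$, $i_0 = 1$, $B_1 = I_n$) and that ${\rm U}(f,h_1,\ldots,h_m)$ is convex. There is no real obstacle here: the entire content of the proof is the sign-flipping trick, which exploits the fact that ${\rm (H_4)}$ imposes the equality $b_i^T v = 0$ rather than the inequality $b_i^T v \leq 0$ appearing in ${\rm (H_3)}$. This makes the proof essentially a one-line deduction built on top of Proposition \ref{Theo4}.
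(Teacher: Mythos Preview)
Your proposal is correct and mirrors the paper's own proof almost verbatim: the paper also takes a nonzero $\bar v$ satisfying ${\rm (H_4)}$, replaces it by $-\bar v$ if $(a+2\bar\lambda x_0)^T\bar v>0$, checks that the resulting vector witnesses ${\rm (H_3)}$, and then invokes Proposition~\ref{Theo4}. There is nothing to add.
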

	
	\begin{proof}
		Assume that ${\rm (H_4)}$ holds.
		Then there exists a nonzero vector $\bar v$ such that
		$A\bar v-\bar\lambda \bar v=0$ and $b_i^T\bar v=0$ for all $i=2,\ldots,m$.
		If $(a+2\bar\lambda x_0)^T\bar v\leq 0$, then assumption ${\rm (H_3)}$ is satisfied.
		Otherwise, let $v^0:=-\bar v$.
		Then
		$Av^0-\bar\lambda v^0=0$,
		$b_i^Tv^0=0$ for all $i=2,\ldots,m$,
		and $(a+2\bar\lambda x_0)^Tv^0<0$.
		Hence, assumption ${\rm (H_3)}$ holds in this case as well.
		The conclusion now follows directly from Proposition~\ref{Theo4}.
	\end{proof}

	\begin{remark}
		In \cite[Theorem~2.1]{JeyakumarLi2014}, the authors investigated the convexity of the set
		${\rm U}(f,h_1,\ldots,h_m)$ under the following dimension condition
		\begin{equation}\label{dksc}
			\dim \ker(A-\bar\lambda I_n)\geq s+1,
		\end{equation}
		where $s:=\dim \operatorname{span}\{b_2,\ldots,b_m\}$.
		We show that the dimension condition \eqref{dksc} is strictly stronger than
		assumption ${\rm (H_4)}$ in Proposition~\ref{Theo6}.
		More precisely, condition \eqref{dksc} implies ${\rm (H_4)}$.
		Indeed, suppose that ${\rm (H_4)}$ does not hold, that is,
		\[
		\ker(A-\bar\lambda I_n)\cap\Big(\bigcap_{i=2}^m b_i^\perp\Big)=\{0\}.
		\]
		Then, by the dimension formula for subspaces, we obtain
		\begin{align*}
			\dim \ker(A-\bar\lambda I_n)
			&=\dim\Big(\ker(A-\bar\lambda I_n)+\bigcap_{i=2}^m b_i^\perp\Big)\\
			&\quad+\dim\Big(\ker(A-\bar\lambda I_n)\cap\bigcap_{i=2}^m b_i^\perp\Big)
			-\dim\Big(\bigcap_{i=2}^m b_i^\perp\Big)\\
			&\leq n+0-(n-s)\\
			&=s.
		\end{align*}
		Consequently, condition \eqref{dksc} fails.
		This shows that \eqref{dksc} is a strictly stronger condition than ${\rm (H_4)}$.
	\end{remark}

	The following examples not only illustrate our obtained results but also demonstrate that
	the existing results in the literature cannot be applied to these problems.
	
	\begin{example}\label{ex1}
		Consider  problem $(TP)$ with $n=1$ and $m=2$, where
		$f(x)=-x^2+x$, $h_1(x)=x^2-1$, and $h_2(x)=x$ for $x\in\mathbb{R}$.
		Then $A=-1$, $a=1$, $x_0=0$, $\bar\lambda=-1$, and $b_2=1$.
		We have
		\[
		\{v\in\mathbb{R}: Av-\bar\lambda v=0,\ (a+2\bar\lambda x_0)^Tv\le0,\ b_2^Tv\le0\}
		=\{v\in\mathbb{R}: v\le0\}\neq\{0\}.
		\]
		Hence, assumption ${\rm (H_3)}$ is satisfied, and consequently ${\rm (H_2)}$ holds.
		Therefore, by Theorem~\ref{Theo3} and Proposition~\ref{Theo4}, the set
		${\rm U}(f,h_1,h_2)$ is convex.
		
		On the other hand, we have
		\[
		\dim \ker(A-\bar\lambda I_1)=\dim \mathbb{R}=1
		\quad\text{and}\quad
		s:=\dim \operatorname{span}\{b_2\}=1.
		\]
		Thus,
		\[
		\dim \ker(A-\bar\lambda I_1)< s+1.
		\]
		This shows that the dimension condition \eqref{dksc} is not satisfied.
		Consequently, Theorem~2.1 in \cite{JeyakumarLi2014} cannot be applied to this problem.
	\end{example}
	
	\begin{example}\label{ex2}
		Consider problem $(TP)$ with $n=2$ and $m=2$, where
		$f(x)=-x_1^2+x_2$, $h_1(x)=x_1^2+x_2^2-1$, and $h_2(x)=x_2$ for
		$x=(x_1,x_2)\in\mathbb{R}^2$.
		We have
		\[
		A=\begin{pmatrix}
			-1 & 0\\
			0 & 0
		\end{pmatrix},\quad
		a=\begin{pmatrix}
			0\\
			1
		\end{pmatrix},\quad
		b_2=\begin{pmatrix}
			0\\
			1
		\end{pmatrix},\quad
		x_0=0,\quad
		\bar\lambda=-1.
		\]
		Then
		\[
		\{v\in\mathbb{R}^2: Av-\bar\lambda v=0,\ b_2^Tv=0\}
		=\{(v_1,v_2)\in\mathbb{R}^2: v_2=0\}\neq\{0\}.
		\]
		Hence, assumption ${\rm (H_4)}$ is satisfied.
		By Proposition~\ref{Theo6}, the set ${\rm U}(f,h_1,h_2)$ is convex.
		
		However, we have
		\[
		\dim \ker(A-\bar\lambda I_2)
		=\dim\{(v_1,v_2)\in\mathbb{R}^2: v_2=0\}=1,
		\quad
		s:=\dim \operatorname{span}\{b_2\}=1,
		\]
		and therefore
		\[
		\dim \ker(A-\bar\lambda I_2)< s+1.
		\]
		Thus, the dimension condition \eqref{dksc} fails, and Theorem~2.1 in
		\cite{JeyakumarLi2014} cannot be applied to the problem in this example.
	\end{example}
	
	The following example shows that Theorem~\ref{Theo3} and
	Propositions~\ref{Theo4} and~\ref{Theo6} may fail if any one of the
	assumptions ${\rm (H_2)}$--${\rm (H_4)}$ is omitted.
	
	\begin{example}\label{ex3}
		Consider  problem $(TP)$ with $n=1$ and $m=2$, where
		$f(x)=-x^2+x$, $h_1(x)=x^2-1$, and $h_2(x)=-x$ for $x\in\mathbb{R}$.
		We have $A=-1$, $a=1$, $x_0=0$, $\bar\lambda=-1$, and $b_2=-1$.
		It can be verified that the set ${\rm U}(f,h_1,h_2)$ is not convex
		(see \cite[Example~2.1]{JeyakumarLi2014}).
		
		We observe that
		\[
		\{v\in\mathbb{R}: Av-\bar\lambda v=0,\ (a+2\bar\lambda x_0)^Tv\le0,\ b_2^Tv\le0\}
		=\{0\}.
		\]
		Hence, assumption ${\rm (H_3)}$ is not satisfied.
		
		Moreover, we have
		\[
		\{v\in\mathbb{R}: Av-\bar\lambda v=0,\ b_2^Tv=0\}
		=\{0\},
		\]
		which shows that condition ${\rm (H_4)}$ also fails.
		
		Finally, for any $\lambda\in \mathbb{R}^n$ satisfying $A+\lambda I_1\succeq 0$, we have
		\[
		[f-\lambda h_1\le0,\ h_2\le0]^\infty
		=\{v\in\mathbb{R}: v=0\}.
		\]
		Therefore, assumption ${\rm (H_2)}$ is not satisfied either.
	\end{example}
	
	Consider the following CDT (Celis--Dennis--Tapia, or two-ball trust-region) problem
	\begin{equation}\tag{{\rm $AP$}}
		\inf\{x^T(A-\bar \lambda I_n)x+a^Tx: \|x\|^2\leq 1,\ \|Cx-c\|^2\leq 1,\ Dx-d\leq 0\},
	\end{equation}
	where $A$ is a symmetric matrix, $C$ is an $(l\times n)$ matrix, $D$ is an $(m\times n)$ matrix, $c\in \mathbb{R}^l$, and $d\in \mathbb{R}^m$.
	
	Applying Theorem~\ref{Theo3} to problem {\rm $(AP)$} yields the following corollary.
	
	\begin{corollary}\label{cor1}
		Consider the problem {\rm $(AP)$}. If the following condition holds
		\begin{equation}\label{dkb}
			{\rm Ker}(A-\bar \lambda I_n)\cap {\rm Ker}(C)\cap \{v\in \mathbb{R}^n: Dv\leq 0,\ a^Tv\leq 0\}\neq\{0\},
		\end{equation}
		then problem {\rm $(AP)$} admits a minimizer on the sphere
		$\{x\in \mathbb{R}^n:\|x\|=1\}$.
	\end{corollary}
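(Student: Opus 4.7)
My plan is to invoke Theorem~\ref{Theo3} with $i_0=1$, $B_{i_0}=I_n$, and $\lambda=0$, identifying $f$ there with $f_{AP}(x):=x^T(A-\bar\lambda I_n)x+a^Tx$, $g_1(x):=\|x\|^2-1$, $g_2(x):=\|Cx-c\|^2-1$, and the rows of $Dx-d\le 0$ with the remaining (affine) $g_i$. Condition $(\rm H_1)$ reads $A-\bar\lambda I_n+0\cdot I_n\succeq 0$, which is automatic since $\bar\lambda$ is the smallest eigenvalue of $A$. For $(\rm H_2)$, the algebraic description of the recession cone (as in Lemma~\ref{lemmanlx}) gives
\[
[f_{AP}\le 0,\,\|Cx-c\|^2-1\le 0,\,Dx-d\le 0]^\infty
=\{v:(A-\bar\lambda I_n)v=0,\ a^Tv\le 0,\ Cv=0,\ Dv\le 0\},
\]
which is exactly the set in assumption~\eqref{dkb}; so $(\rm H_2)$ holds.

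With $(\rm H_1)$ and $(\rm H_2)$ in hand, I would replay the first half of the proof of Theorem~\ref{Theo3} on the problem $(GP_0)$, which coincides with $(AP)$ since $\lambda=0$ and $\omega=0$. The feasible set of $(AP)$ is contained in the unit ball and hence compact, so a minimizer $\bar x$ exists; if $\|\bar x\|=1$ we are done, so suppose $\|\bar x\|^2<1$. Picking $0\ne \bar v$ from~\eqref{dkb} and setting $x(t):=\bar x+t\bar v$, the nonconstant quadratic $t\mapsto \|\bar x+t\bar v\|^2$ starts below $1$ and tends to $+\infty$, so by the intermediate value theorem there exists $t_0>0$ with $\|x(t_0)\|^2=1$. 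Using $C\bar v=0$, $D\bar v\le 0$, $(A-\bar\lambda I_n)\bar v=0$, and $a^T\bar v\le 0$, one checks in turn that $\|Cx(t_0)-c\|^2=\|C\bar x-c\|^2\le 1$, $Dx(t_0)\le D\bar x\le d$, and $f_{AP}(x(t_0))=f_{AP}(\bar x)+t_0\, a^T\bar v\le f_{AP}(\bar x)$. Since $\bar x$ is a minimizer, $x(t_0)$ is too, and it lies on the unit sphere.

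The only slightly delicate point is that the recession-cone formula in Lemma~\ref{lemmanlx} formally requires the underlying set to be nonempty, whereas here we never test $[f_{AP}\le 0,\ldots]$ for nonemptiness; however, the shift argument only uses the algebraic vector $\bar v$ handed to us by~\eqref{dkb}, so the nonemptiness hypothesis plays no role, and I do not anticipate any serious obstacle.
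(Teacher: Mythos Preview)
Your proposal is correct and follows essentially the same route as the paper: verify that \eqref{dkb} yields $(\mathrm{H_1})$ and $(\mathrm{H_2})$ for the choice $i_0=1$, $B_{i_0}=I_n$, $\lambda=0$, and then replay the ``push to the boundary'' argument from the first half of the proof of Theorem~\ref{Theo3} on $(GP_0)=(AP)$. The paper's own proof is terser (it simply says the hypotheses $(\mathrm{H_1})$, $(\mathrm{H_2})$ are easily checked and refers back to Theorem~\ref{Theo3}), but your explicit computations---in particular the observation that $C^TCv=0$ forces $Cv=0$ so the linear part $-2c^TCv$ drops out, and the step-by-step verification that $x(t_0)$ stays feasible and does not increase $f_{AP}$---are exactly what that reference unpacks to.
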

	
	\begin{proof}
		Assume that condition \eqref{dkb} is satisfied. It is straightforward to verify that assumptions ${\rm (H_1)}$ and ${\rm (H_2)}$ hold. By repeating the arguments used in the proof of Theorem~\ref{Theo3}, we obtain the desired conclusion.
	\end{proof}
	
	\begin{remark}
		The result stated in Corollary~\ref{cor1} was first established in
		\cite[Theorem~5.1 and Lemma~5.1]{BomzeJeyakumarLi2018}. Condition~\eqref{dkb} presented here corrects condition~$(15)$ in \cite[Theorem~5.1]{BomzeJeyakumarLi2018}.
	\end{remark}
	
	\begin{remark}
		In \cite[Example~3]{LaraUrruty2022}, the authors conjectured that the convexity of the set ${\rm U}(f,g_1,\ldots,g_m)$ does not hold for $m\geq 2$, although no counterexample was provided. Through Examples~\ref{ex1}--\ref{ex3}, we confirm that this conjecture is indeed true.
	\end{remark}
	
	We now obtain the following corollary concerning the hidden convexity of the classical trust-region system.
	
	\begin{corollary}{\rm (see \cite[Theorem~2.2]{Polyak1998})}
		Let $f(x):=x^TAx+a^Tx+\beta$ and let $h_1(x):=\|x-x_0\|^2-\alpha$, where
		$A\in \mathbb{R}^{n\times n}_{+}$, $a,x_0\in \mathbb{R}^n$, and
		$\alpha,\beta\in \mathbb{R}$. Then the set ${\rm U}(f,h_1)$ is convex.
	\end{corollary}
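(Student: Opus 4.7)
The plan is to derive this as a direct specialization of Theorem~\ref{Theo3} to the case $m=1$, $i_0=1$. Since $h_1(x)=x^T I_n x-2x_0^T x+\|x_0\|^2-\alpha$, the associated matrix is $B_1=I_n$, which is positive definite, so the blanket requirement $B_{i_0}\succ 0$ holds automatically. Because $I_0=I\setminus\{1\}=\emptyset$, condition ${\rm (H_2)}$ reduces to the single requirement $[f+\lambda h_1\leq 0]^\infty\neq\{0\}$ for a suitable $\lambda\in\mathbb{R}$.

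For the choice of $\lambda$, I would take $\lambda:=-\bar\lambda$, where $\bar\lambda$ denotes the smallest eigenvalue of the symmetric matrix $A$. The spectral theorem immediately yields $A+\lambda I_n=A-\bar\lambda I_n\succeq 0$, verifying ${\rm (H_1)}$. Setting $F:=f+\lambda h_1$ and expanding gives
$$F(x)=x^T(A-\bar\lambda I_n)x+(a+2\bar\lambda x_0)^T x+\bigl(\beta-\bar\lambda\|x_0\|^2+\bar\lambda\alpha\bigr),$$
which is a convex (but singular) quadratic because $\bar\lambda$ is an eigenvalue of $A$, so $\ker(A-\bar\lambda I_n)\neq\{0\}$. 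Pick any nonzero $\bar v$ in this kernel; by replacing $\bar v$ with $-\bar v$ if necessary, we may arrange $(a+2\bar\lambda x_0)^T\bar v\leq 0$. Lemma~\ref{lemmanlx} applied to $F\leq 0$ then identifies $\bar v$ as a nontrivial element of $[F\leq 0]^\infty$, delivering ${\rm (H_2)}$, provided the sublevel set $[F\leq 0]$ is nonempty.

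The main obstacle is precisely this nonemptiness check, for which I would split into two subcases. If some $\bar v\in\ker(A-\bar\lambda I_n)\setminus\{0\}$ admits $(a+2\bar\lambda x_0)^T\bar v<0$, then $F(x+t\bar v)=F(x)+t(a+2\bar\lambda x_0)^T\bar v\to -\infty$ as $t\to+\infty$, so $[F\leq 0]\neq\emptyset$ and we are done. Otherwise $a+2\bar\lambda x_0$ is orthogonal to $\ker(A-\bar\lambda I_n)$; by symmetry of $A-\bar\lambda I_n$ this vector lies in $\operatorname{Im}(A-\bar\lambda I_n)$, so $F$ attains a finite global minimum $F^*$. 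If $F^*\leq 0$, nonemptiness holds and Theorem~\ref{Theo3} applies directly. If $F^*>0$, I would exploit the translational invariance ${\rm U}(f-c,h_1)={\rm U}(f,h_1)-(c,0)$: replacing $f$ by $f-c$ with $c>F^*$ shifts $F$ downward without altering the recession cone computation, so Theorem~\ref{Theo3} yields convexity of ${\rm U}(f-c,h_1)$, and convexity of ${\rm U}(f,h_1)$ follows by translation.
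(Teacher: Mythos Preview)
Your argument is correct and follows essentially the same route as the paper, which simply invokes Proposition~\ref{Theo6} (with $m=1$ the condition ${\rm (H_4)}$ is automatic since $\bar\lambda$ is an eigenvalue of $A$) together with Theorem~\ref{Theo3}. Your extra case analysis on the nonemptiness of $[F\leq 0]$ and the translation trick are sound but arguably unnecessary: the proof of Theorem~\ref{Theo3} only ever uses the existence of a nonzero $\bar v$ satisfying the linear conditions $(A-\bar\lambda I_n)\bar v=0$ and $(a+2\bar\lambda x_0)^T\bar v\leq 0$, which you have already produced directly---so even if the sublevel set happened to be empty, the argument of Theorem~\ref{Theo3} would still go through with this $\bar v$.
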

	
	\begin{proof}
		The conclusion follows directly from Theorem~\ref{Theo3} and Proposition~\ref{Theo6}.
	\end{proof}

	\section{Hidden convexity of a system of two quadratic inequalities}
	In this section, we study the convexity of the following set
	$$
	C:= \{(f(x), g(x)):\; x\in M\}+\mathbb{R}^{2}_+,
	$$
	where $M$ is an affine manifold in $\mathbb{R}^n$ and $f,g$ are quadratic functions on $\mathbb{R}^n$.
	
	The following theorem, proposed in \cite[Corollary~10]{TuyTuan2013}, is an important result concerning the hidden convexity of systems of quadratic inequalities.
	
	\begin{theorem}{\rm(\cite[Corollary~10]{TuyTuan2013}, \cite[Corollary~10.10]{Tuy2016})}\label{Theo3m} 
		Suppose that $M$ is an affine manifold in $\mathbb{R}^n$ and $f,g$ are quadratic functions on $\mathbb{R}^n$.	 
		Then the set
		$$
		C= \{(f(x), g(x)):\; x\in M\}+\mathbb{R}^{2}_+
		$$
		is convex.                    
	\end{theorem}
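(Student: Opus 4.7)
My plan mirrors the value-function strategy used in the proof of Theorem~\ref{Theo3}, but with Yakubovich's classical S-lemma supplying the convexification step in place of a direct convexity assumption on the Lagrangian.

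First, I would reduce to $M=\mathbb{R}^n$. Writing $M=x_0+V$ for some linear subspace $V\subset\mathbb{R}^n$ of dimension $k$ and choosing any affine parameterization $\phi\colon\mathbb{R}^k\to M$, the compositions $f\circ\phi$ and $g\circ\phi$ remain quadratic and
$$\{(f(x),g(x)):x\in M\}=\{(f(\phi(y)),g(\phi(y))):y\in\mathbb{R}^k\},$$
so I may assume $M=\mathbb{R}^n$ from the outset.

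Next, I would reformulate convexity of $C$ as convexity of a one-dimensional value function. Define
$$\varphi(\omega):=\inf\{f(x):x\in\mathbb{R}^n,\ g(x)\le\omega\},\qquad\omega\in\mathbb{R},$$
with the convention $\varphi(\omega)=+\infty$ when the feasible set is empty. A direct verification gives $C=\{(y,\omega)\in\mathbb{R}^2:y\ge\varphi(\omega)\}$, so that $C$ is convex if and only if $\varphi$ is convex.

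The key step is to prove $\varphi$ convex. Since $g$ is continuous, $\operatorname{dom}\varphi$ is an interval of the form $[\inf g,+\infty)$ or $(\inf g,+\infty)$, and on its interior Slater's condition $g(x_0)<\omega$ holds automatically. Yakubovich's classical S-lemma (strong duality for a QCQP with a single quadratic constraint; see, e.g., \cite{PolikTerlaky2007}) then yields
$$\varphi(\omega)=\sup_{\lambda\ge 0}\bigl\{\psi(\lambda)-\lambda\omega\bigr\},\qquad \psi(\lambda):=\inf_{x\in\mathbb{R}^n}\{f(x)+\lambda g(x)\},$$
which, upon restricting the supremum to those $\lambda\ge 0$ with $\psi(\lambda)>-\infty$, exhibits $\varphi$ as a pointwise supremum of affine functions of $\omega$. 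Hence $\varphi$ is convex on the interior of its effective domain, and lower semicontinuity extends convexity to the (at most single) boundary point.

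The hardest part, I expect, will be the unconditional applicability of strong duality. Standard formulations of Yakubovich's S-lemma require Slater's condition, which fails precisely at the boundary point $\omega=\inf g$ when the infimum of $g$ is attained; this case has to be handled via a limiting argument invoking the lower semicontinuity of $\varphi$. A secondary technicality is the case $\varphi(\omega)=-\infty$, which corresponds to $C$ containing a downward vertical ray and is consistent with convexity but has to be recorded explicitly in the bookkeeping.
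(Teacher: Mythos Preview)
Your strategy is sound and genuinely different from the paper's. The paper reduces to dimension one by restricting $f,g$ to the affine line through any two preimages $\tilde a,\tilde b\in M$, proves the one-dimensional case (Proposition~\ref{Theo3n}) by an exhaustive six-case analysis on the signs of the leading coefficients (invoking an S-lemma-type separation result from \cite{TuyTuan2013} only in the strictly convex subcase), and then observes that $[u,v]\subset C_{\tilde a\tilde b}\subset C$. Your route---writing $\varphi$ as the Lagrangian dual value and reading off convexity from the sup-of-affine representation---is more streamlined and avoids the case split; the paper's route keeps the appeal to the S-lemma confined to a very restricted one-dimensional setting.

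There is, however, a real gap at the boundary. The value function $\varphi$ is \emph{not} lower semicontinuous in general: with $n=2$, $g(x)=x_1^2$ and $f(x)=x_1x_2$ one gets $\varphi(0)=0$ but $\varphi(\omega)=-\infty$ for every $\omega>0$. (This is exactly the phenomenon the paper flags when it says that $C$ need not be closed.) So your ``limiting argument invoking the lower semicontinuity of $\varphi$'' cannot work as stated. The repair is immediate once you replace lsc by weak duality: for \emph{every} $\omega$ one has $\varphi(\omega)\ge\hat\varphi(\omega):=\sup_{\lambda\ge0}\{\psi(\lambda)-\lambda\omega\}$, with equality on the interior of $\operatorname{dom}\varphi$ by Yakubovich. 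Since $\hat\varphi$ is convex and any point $(1-t)\omega_0+t\omega_1$ with $t\in(0,1)$, $\omega_1>\omega_0$, lies in the interior, the convexity inequality for $\varphi$ follows from $\varphi=\hat\varphi$ at the midpoint together with $\varphi\ge\hat\varphi$ at the endpoints. Equivalently, $\varphi$ is nonincreasing, hence $\varphi(\omega_0)\ge\lim_{\omega\downarrow\omega_0}\varphi(\omega)$, and a nonincreasing function that is convex on $(\omega_0,\infty)$ and jumps upward (not downward) at the left endpoint is automatically convex on $[\omega_0,\infty)$.
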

	
	The proof of Theorem~\ref{Theo3m} given in \cite{TuyTuan2013} contains a gap, since the set $C$ is not closed in general. This issue was addressed in \cite[Corollary~10.10]{Tuy2016}. However, the revised proof still contains an illogical argument. More precisely, Corollary~10.8 in \cite{Tuy2016} cannot be applied to the case where there exists no $x\in [a,b]$ such that $f(x)<t_1'$ and $g(x)<t_2'$. Recently, Theorem~\ref{Theo3m} was proved in \cite{BazanOpazo2016} using a different approach. In this section, we provide a complete  proof of Theorem~\ref{Theo3m}. To this end, we first establish the following proposition.
	
	\begin{proposition}\label{Theo3n}
		Suppose that $\alpha(x):=a_1x^2+a_2x+a_3$ and $\beta(x):=b_1x^2+b_2x+b_3$, where $a_i,b_i\in\mathbb{R}$ for $i=1,2,3$, and $x\in\mathbb{R}$. 	 
		Then the set
		$$
		V:= \{(\alpha(x), \beta(x)):\; x\in \mathbb{R}\}+\mathbb{R}^{2}_+
		$$
		is convex.                   
	\end{proposition}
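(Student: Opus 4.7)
The plan is to recognize $V$ as (essentially) the epigraph of the optimal value function
\[
\varphi(t) := \inf\{\alpha(x) : \beta(x) \le t\} \qquad (t \in \mathbb{R}),
\]
with $\varphi(t)=+\infty$ when the sublevel set is empty, so that convexity of $V$ reduces to convexity of $\varphi$. I would then split the analysis into two cases based on the signs of the leading coefficients $a_1$ and $b_1$, applying Theorem~\ref{Theo3} whenever possible and otherwise arguing directly.

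\textbf{Case 1: $\max\{a_1,b_1\}>0$.} By the symmetry of $V$ under the coordinate swap $(s,t)\mapsto(t,s)$, I may assume $b_1>0$. I would then apply Theorem~\ref{Theo3} with $m=1$, $i_0=1$, and $\lambda:=-a_1/b_1$, so that $A+\lambda B_1=a_1+\lambda b_1=0\succeq 0$ and $({\rm H}_1)$ is automatic. The function $\alpha+\lambda\beta$ is affine in $x$, so whenever its sublevel set $[\alpha+\lambda\beta\le 0]$ is nonempty its recession cone in $\mathbb{R}$ is a half-line or all of $\mathbb{R}$; hypothesis $({\rm H}_2)$ is then satisfied and Theorem~\ref{Theo3} delivers convexity of $V$. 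The genuinely degenerate subcase is $\alpha+\lambda\beta\equiv c$ with $c>0$; there the image curve lies on the affine line $\{(s,t):s=(a_1/b_1)t+c\}$, and a short calculation splitting on the sign of $a_1/b_1$ shows that $V$ reduces to a translated orthant, a closed half-plane, or the intersection of two closed half-planes, each of which is convex.

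\textbf{Case 2: $a_1\le 0$ and $b_1\le 0$.} Here I would argue directly using unboundedness of concave and affine quadratics. If $a_1<0$ and $b_1<0$, both $\alpha$ and $\beta$ tend to $-\infty$ as $|x|\to\infty$, so for any $(s,t)\in\mathbb{R}^2$ taking $|x|$ large enough forces $\alpha(x)\le s$ and $\beta(x)\le t$ simultaneously, yielding $V=\mathbb{R}^2$. If exactly one leading coefficient vanishes---say $a_1=0$ and $b_1<0$, the other case being symmetric---then $\alpha$ is affine and $\beta$ strictly concave, and since $\beta\to-\infty$ in both directions a brief sub-case split on the slope of $\alpha$ shows that $V$ is either all of $\mathbb{R}^2$ or a closed half-plane. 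Finally, when $a_1=b_1=0$ both functions are affine, the image is an affine subset of $\mathbb{R}^2$ (a point or a line), and its Minkowski sum with $\mathbb{R}^2_+$ is manifestly convex (a translated orthant, a half-plane, or all of $\mathbb{R}^2$).

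The principal obstacle is the degenerate subcase within Case~1: when $[\alpha+\lambda\beta\le 0]$ is empty, hypothesis $({\rm H}_2)$ of Theorem~\ref{Theo3} is not meaningfully checkable, forcing a separate explicit description of the image as a line and of $V$ as a polyhedral set. The sub-configurations in Case~2 are individually routine but the enumeration over the sign patterns of $(a_1,b_1)$ must be exhaustive.
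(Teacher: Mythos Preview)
Your proposal is correct and takes a genuinely different route from the paper's own proof. In the key case where one of the leading coefficients is strictly positive, the paper does \emph{not} invoke Theorem~\ref{Theo3}; instead it first shows that $V$ is closed (using compactness of the sublevel sets of the strictly convex quadratic) and then, for each $t\notin V$, appeals to the one-dimensional S-lemma \cite[Corollary~8]{TuyTuan2013} to produce a separating half-space, concluding that $V$ is an intersection of half-spaces. Your idea of applying Theorem~\ref{Theo3} directly with $m=1$, $i_0=1$, $\lambda=-a_1/b_1$ is a clean self-referential shortcut: hypothesis $(\mathrm{H}_1)$ becomes $0\succeq 0$, and $(\mathrm{H}_2)$ reduces to the recession cone of an affine sublevel set in $\mathbb{R}$ being nontrivial, which is automatic whenever that set is nonempty. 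The degenerate subcase you single out (where $\alpha+\lambda\beta\equiv c>0$) is handled correctly, though it is even simpler than you indicate: the image curve is then a ray, and a Minkowski sum of a ray with $\mathbb{R}^2_+$ is convex with no further case-splitting needed. Your Case~2 enumeration over the sign pattern of $(a_1,b_1)$ matches the paper's direct computations (Cases~2--6 there) up to relabelling.

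What each approach buys: yours keeps the argument internal to the paper and avoids the external citation to \cite{TuyTuan2013}, at the price of relying on Theorem~\ref{Theo3} (which is logically independent of Proposition~\ref{Theo3n}, so there is no circularity). The paper's closedness-plus-separation argument makes Proposition~\ref{Theo3n} stand on its own and exhibits the stronger intermediate fact that $V$ is closed when one quadratic is strictly convex, which your route does not surface. One cosmetic point: your phrase ``symmetry of $V$ under the coordinate swap'' should be stated as symmetry of the \emph{problem}, since $V$ itself is not invariant under $(s,t)\mapsto(t,s)$; rather, swapping $\alpha$ and $\beta$ replaces $V$ by its image under that linear isomorphism, and convexity is preserved.
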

	
	\begin{proof}
		If $a_1\geq 0$ and $b_1\geq 0$, then both $\alpha$ and $\beta$ are convex functions. Consequently, the set $\{(\alpha(x), \beta(x)):\; x\in \mathbb{R}\}$ is convex, and hence $V$ is convex. We now consider the remaining cases where at least one of $\alpha$ and $\beta$ is not convex.
		
		\medskip
		\noindent
		{\it Case 1: $a_1>0$ or $b_1>0$.}
		Without loss of generality, assume that $a_1>0$. Let $\{t^k\}\subset V$ be a sequence converging to $\bar t=(\bar t_1,\bar t_2)\in \mathbb{R}^2$. For each $k$, since $t^k=(t^k_1,t^k_2)\in V$, there exists $z^k\in \mathbb{R}$ such that
		$\alpha(z^k)\leq t^k_1$ and $\beta(z^k)\leq t^k_2$.
		Since $\alpha$ is strictly convex, each sublevel set $\{x\in \mathbb{R}:\alpha(x)\leq t\}$ is compact. Hence, the sequence $\{z^k\}$ is bounded.
		
		Passing to a subsequence if necessary, we may assume that $z^k\to \bar z\in \mathbb{R}$ as $k\to\infty$. By the continuity of $\alpha$ and $\beta$, we obtain
		$\alpha(\bar z)\leq \bar t_1$ and $\beta(\bar z)\leq \bar t_2$.
		Thus, $V$ is closed.
		
		Next, we prove the convexity of $V$. Let $t=(t_1,t_2)\in \mathbb{R}^2\setminus V$. Since $V$ is closed, there exists $\tilde t=(\tilde t_1,\tilde t_2)\notin V$ such that $\tilde t_1>t_1$ and $\tilde t_2>t_2$. Consequently, there exists no $x\in \mathbb{R}$ satisfying
		$$
		\alpha(x)\leq t_1<\tilde t_1
		\quad \text{and} \quad
		\beta(x)\leq t_2<\tilde t_2.
		$$
		By \cite[Corollary~8]{TuyTuan2013}, there exists $\lambda=(\lambda_1,\lambda_2)\in \mathbb{R}^2_+\setminus\{0\}$ such that
		$$
		\lambda_1(\alpha(x)-\tilde t_1)+\lambda_2(\beta(x)-\tilde t_2)\geq 0
		\quad \forall x\in \mathbb{R}.
		$$
		Define
		$$
		M_t:=\{u\in \mathbb{R}^2:\lambda^Tu-\lambda^T\tilde t\geq 0\}.
		$$
		Since $\tilde t_1>t_1$ and $\tilde t_2>t_2$, we have $\lambda^Tt-\lambda^T\tilde t<0$, and thus $t\notin M_t$. For any $v\in V$, one has $\tilde t\leq v$, which implies $\lambda^Tv-\lambda^T\tilde t\geq 0$. Hence, $V\subset M_t$.
		
		Therefore, for each $t\notin V$, there exists a half-space $M_t$ separating $t$ from $V$. This shows that
		$$
		V=\bigcap_{t\notin V} M_t
		$$
		is convex.
		
		\medskip
		\noindent
		{\it Case 2: $a_1=0$, $a_2<0$, and $b_1=0$.}
		Then $\alpha(x)=c_1$ for all $x\in \mathbb{R}$ and $\beta(x)\to -\infty$ as $x\to -\infty$. Hence,
		$V=[c_1,+\infty)\times\mathbb{R}$, which is convex.
		
		\medskip
		\noindent
		{\it Case 3: $a_1=0$, $a_2<0$, and $b_1\neq 0$.}
		
		\noindent
		{\it Case 4: $a_1<0$ and $a_2<0$.}
		
		\noindent
		{\it Case 5: $a_1<0$, $a_2=0$, and $b_2\neq 0$.}
		
		In Cases~3--5, we have $(\alpha(x),\beta(x))\to (-\infty,-\infty)$ as $x\to \pm\infty$. Thus, $V=\mathbb{R}^2$, which is convex.
		
		\medskip
		\noindent
		{\it Case 6: $a_1<0$, $a_2=0$, and $b_2=0$.}
		Then $\alpha(x)\to -\infty$ as $x\to -\infty$ and $\beta(x)=c_2$ for all $x\in \mathbb{R}$. Hence,
		$V=\mathbb{R}\times[c_2,+\infty)$, which is convex.
		
		The proof is complete.
	\end{proof}
	
	\medskip
	\noindent
	\textbf{Proof of Theorem~\ref{Theo3m}.}
	
	For any two points $a,b\in M$, let $M_{ab}$ denote the affine manifold spanned by $a$ and $b$, and define
	$$
	C_{ab}:= \{(f(x), g(x)):\; x\in M_{ab}\}+\mathbb{R}^{2}_+.
	$$
	For each $x\in M_{ab}$, there exists $t\in \mathbb{R}$ such that $x=ta+(1-t)b$. Hence,
	$$
	C_{ab}= \{(\tilde f(t), \tilde g(t)):\; t\in \mathbb{R}\}+\mathbb{R}^{2}_+,
	$$
	where $\tilde f$ and $\tilde g$ are quadratic functions of $t$ defined by
	$$
	\tilde f(t)=f(ta+(1-t)b), \qquad
	\tilde g(t)=g(ta+(1-t)b).
	$$
	By Proposition~\ref{Theo3n}, the set $C_{ab}$ is convex.
	
	To prove the convexity of $C$, let $u=(u_1,u_2)$ and $v=(v_1,v_2)$ be arbitrary points in $C$. Then there exist $\tilde a,\tilde b\in M$ such that
	$$
	f(\tilde a)\leq u_1,\quad g(\tilde a)\leq u_2,
	$$
	and
	$$
	f(\tilde b)\leq v_1,\quad g(\tilde b)\leq v_2.
	$$
	This implies that $u,v\in C_{\tilde a\tilde b}$. Since $C_{\tilde a\tilde b}$ is convex, we have
	$$
	[u,v]\subset C_{\tilde a\tilde b}\subset C.
	$$
	Therefore, $C$ is convex. The proof is complete.

	\section{S-lemma, global optimality and strong duality}
	
	In this section, we first provide necessary and sufficient conditions for the S-lemma associated with systems of quadratic inequalities. We also establish strong duality results for quadratic programming.
	
	\begin{theorem}\label{Theo7} 
		Consider the following statements:
		\begin{description}
			\item{$(i)$} {\rm [S-lemma]} For each quadratic function $f:\mathbb{R}^n\to \mathbb{R}$,
			\begin{align*}
				&[g_i(x)\leq 0,\; i=1, \ldots, m \Rightarrow f(x)\geq 0] \\[0.3em]
				\Leftrightarrow \; \; &(\exists \lambda_i\geq0,\; i=1, \ldots, m)\; (\forall x\in \mathbb{R}^n)\;  
				f(x)+\sum_{i=1}^{m}\lambda_ig_i(x)\geq 0;
			\end{align*}
			
			\item{$(ii)$} The set
			$$\bigcup_{\lambda\in  \mathbb{R}^m_+} \; {\rm epi}\bigg(\sum_{i=1}^{m}\lambda_ig_i\bigg)^*$$
			is closed;
			
			\item{$(iii)$} {\rm [Slater's condition]} There exists $x^0\in \mathbb{R}^n$ such that
			$g_i(x^0)< 0,\; i=1, \ldots, m$;
			
			\item{$(iv)$} {\rm [Strong duality]} For each quadratic function $f$,
			\begin{equation}\label{e11} 
				\inf \{f(x): g_i(x)\leq 0,\; i=1,\ldots,m\}
				=\max_{\lambda\in \mathbb{R}^m_+}\inf_{x\in \mathbb{R}^n}
				\bigg\{f(x)+\sum_{i=1}^{m}\lambda_ig_i(x)\bigg\}.
			\end{equation}
		\end{description}
		The following statements hold:
		\begin{description}
			\item{$(a)$} $(i)$ implies $(ii)$;
			
			\item{$(b)$} If ${\rm U}(f, g_1, g_2,\ldots,g_m)$ is a convex set, then $(iii)$ implies $(i)$; 
			
			\item{$(c)$} If the assumptions $({\rm H_1})$ and $({\rm H_2})$ hold, then $(iii)$ implies $(i)$;
			
			\item{$(d)$} $(i)$ and $(iv)$ are equivalent;
			
			\item{$(e)$} If $m=1$, then the statements $(i)$--$(iv)$ are equivalent.
		\end{description}
	\end{theorem}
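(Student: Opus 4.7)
The plan is to establish parts (a)--(e) separately, exploiting the results already proved in the paper.

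For (a), I would characterize membership in the union by the definition of the conjugate: $(v,r)\in\bigcup_{\lambda\geq 0}{\rm epi}\bigl(\sum_{i=1}^m\lambda_ig_i\bigr)^*$ iff there exists $\lambda\geq 0$ with $r-v^Tx+\sum_{i=1}^m\lambda_ig_i(x)\geq 0$ for every $x\in\mathbb{R}^n$. Applying the S-lemma in (i) to the affine (hence quadratic) function $f(x):=r-v^Tx$, this is equivalent to the implication ``$g_i(x)\leq 0,\ i=1,\ldots,m\Rightarrow v^Tx\leq r$,'' which is precisely $r\geq \sigma_C(v)$ with $C:=\bigcap_{i=1}^m[g_i\leq 0]$. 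Hence the union equals ${\rm epi}\,\sigma_C$, an intersection of closed half-spaces and therefore closed.

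For (b), I would argue by hyperplane separation in $\mathbb{R}^{m+1}$. The hypothesis of the forward direction of (i) is precisely that the convex set ${\rm U}(f,g_1,\ldots,g_m)$ is disjoint from the convex set $K:=(-\infty,0)\times(-\infty,0]^m$; separating them produces a nonzero $(\mu_0,\ldots,\mu_m)$ whose components must all be nonnegative (forced by the unboundedness of $K$ in the negative coordinate directions), and the separating inequality reduces to $\mu_0f(x)+\sum_{i=1}^m\mu_ig_i(x)\geq 0$ for every $x$. The main obstacle is to rule out $\mu_0=0$: in that case, evaluating at a Slater point $x^0$ with $g_i(x^0)<0$ would give $\sum_{i=1}^m\mu_ig_i(x^0)<0$, contradicting the inequality. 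Hence $\mu_0>0$ and $\lambda_i:=\mu_i/\mu_0$ furnishes the required multipliers; the reverse implication of (i) is immediate by nonnegativity of the $\lambda_i$.

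Parts (c) and (d) are then short. Part (c) combines Theorem~\ref{Theo3}, which gives convexity of ${\rm U}$ under $({\rm H_1})$--$({\rm H_2})$, with part (b). For $(i)\Rightarrow(iv)$ in part (d), I apply the S-lemma to the shifted quadratic $f-v^*$, where $v^*$ denotes the primal optimum, so that the antecedent of (i) holds by definition of $v^*$; the resulting $\lambda^*$ satisfies $f+\sum_{i=1}^m\lambda^*_ig_i\geq v^*$ globally, and by weak duality also attains the max. Conversely, if $f\geq 0$ on the feasible set then $v^*\geq 0$, and a maximizer in (iv) provides a global certificate $f+\sum\lambda^*_ig_i\geq v^*\geq 0$, giving $(iv)\Rightarrow(i)$.

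For (e), with $m=1$ Theorem~\ref{Theo3m} (applied with $M=\mathbb{R}^n$) ensures ${\rm U}(f,g_1)$ is convex unconditionally; part (b) then yields $(iii)\Rightarrow(i)$, part (a) gives $(i)\Rightarrow(ii)$, and part (d) supplies $(i)\Leftrightarrow(iv)$. The main obstacle I expect is closing the cycle of equivalences, specifically obtaining $(i)$ from $(ii)$ and linking these to $(iii)$; I anticipate exploiting the closure condition $(ii)$ as a Jeyakumar--Lee type constraint qualification (cf.\ formula~\eqref{h4}) together with the unconditional convexity of ${\rm U}(f,g_1)$, so that the separation argument of (b) can be pushed through in the single-constraint setting without an explicit Slater point.
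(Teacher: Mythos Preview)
Your treatment of (a)--(d) is correct and matches the paper's approach in substance. For (a) you recast the union as ${\rm epi}\,\sigma_C$, which is a slightly cleaner packaging of exactly the same computation the paper carries out sequentially; for (b) you spell out the standard separation argument that the paper obtains by citing \cite[Theorem~1]{LaraUrruty2022}; (c) is identical; and your direct argument for $(iv)\Rightarrow(i)$ is equivalent to the paper's contradiction argument. One small omission in your $(i)\Rightarrow(iv)$: you should dispose of the case $v^*=-\infty$ before shifting by $v^*$, as the paper does.

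The real issue is (e). You correctly identify that the only missing link after invoking Theorem~\ref{Theo3m}, (a), (b), and (d) is to get back from $(ii)$ to either $(i)$ or $(iii)$, but your plan for doing so---``push the separation of (b) through using $(ii)$ as a constraint qualification''---is not an argument, only a hope. Concretely, in the separation the degenerate case $\mu_0=0$ forces $g_1\geq 0$ on all of $\mathbb{R}^n$, and it is not at all clear how the closedness in $(ii)$ alone rescues you there without further case analysis; the reference to formula~\eqref{h4} does not obviously help, since that formula concerns normal cones rather than the global inequality you need. The paper closes the cycle differently and by external appeal: it shows $(ii)\Rightarrow(iii)$ directly by invoking \cite[Theorem~3.1]{JeyakumarHuyLi2009}. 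If you want a self-contained proof of (e), you must either reproduce that implication or give a genuine argument for $(ii)\Rightarrow(i)$ in the one-constraint case; as written, your (e) has a gap.
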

	
	\begin{proof}
		$(a)$ Suppose that $(i)$ holds. Let
		\[
		(u,\alpha)\in {\rm cl}\bigcup_{\lambda\in \mathbb{R}^m_+} 
		{\rm epi}\bigg(\sum_{i=1}^{m}\lambda_ig_i \bigg)^*.
		\]
		Then there exist sequences $\{u^k\}\subset\mathbb{R}^n$, 
		$\{\alpha^k\}\subset\mathbb{R}$, and $\{\lambda^k\}\subset \mathbb{R}^m_+$ such that, for each $k$,
		\[
		(u^k,\alpha^k)\in {\rm epi}\bigg(\sum_{i=1}^{m}\lambda_i^kg_i \bigg)^*,
		\]
		with $\lim_{k\to \infty}u^k=u$ and $\lim_{k\to \infty}\alpha^k=\alpha$.
		Hence,
		\[
		\bigg(\sum_{i=1}^{m}\lambda_i^kg_i \bigg)^*(u^k)\leq \alpha^k.
		\]
		That is, for each $x\in \mathbb{R}^n$,
		\[
		(u^k)^Tx-\sum_{i=1}^{m}\lambda_i^kg_i(x) \leq \alpha^k.
		\]
		
		For each $x\in \mathbb{R}^n$ satisfying $g_i(x)\leq 0$, $i=1,\ldots,m$, we have
		\[
		\sum_{i=1}^{m}\lambda_i^kg_i(x)\leq 0,
		\]
		and hence $(u^k)^Tx\leq \alpha^k$.
		Letting $k\to \infty$, we obtain
		\[
		-u^Tx+\alpha\geq 0.
		\]
		Define
		\[
		f(x):=-u^Tx+\alpha.
		\]
		Then $f$ is a quadratic function. By assumption $(i)$, there exist $\mu_i\geq 0$, $i=1,\ldots,m$, such that
		\[
		-u^Tx+\alpha+\sum_{i=1}^{m}\mu_ig_i(x)\geq 0
		\quad \text{for all } x\in \mathbb{R}^n.
		\]
		This implies
		\[
		u^Tx-\sum_{i=1}^{m}\mu_ig_i(x)\leq \alpha
		\quad \text{for all } x\in \mathbb{R}^n,
		\]
		and therefore
		\[
		\bigg(\sum_{i=1}^{m}\mu_ig_i\bigg)^*(u)\leq\alpha.
		\]
		Consequently,
		\[
		(u,\alpha)\in {\rm epi}\bigg(\sum_{i=1}^{m}\mu_ig_i\bigg)^*
		\subset \bigcup_{\lambda\in \mathbb{R}^m_+} 
		{\rm epi}\bigg(\sum_{i=1}^{m}\lambda_ig_i \bigg)^*.
		\]
		Thus, $\bigcup_{\lambda\in \mathbb{R}^m_+} 
		{\rm epi}\big(\sum_{i=1}^{m}\lambda_ig_i \big)^*$ is closed, and $(ii)$ follows.
		
		\medskip
		$(b)$ By \cite[Theorem~1]{LaraUrruty2022}, the desired conclusion follows immediately.
		
		\medskip
		$(c)$ Under assumptions $({\rm H_1})$ and $({\rm H_2})$, Theorem~\ref{Theo3} ensures that the set
		${\rm U}(f, g_1, g_2,\ldots,g_m)$ is convex. Hence, the conclusion follows from part $(b)$.
		
		$(d)$ We first show that $(i)\Rightarrow(iv)$. 
		Suppose that $(i)$ holds. For each quadratic function $f$, the weak duality inequality always holds:
		\[
		\inf \{f(x): g_i(x)\leq 0,\ i=1,\ldots,m\}
		\geq
		\max_{\lambda\in  \mathbb{R}^m_+}
		\inf_{x\in \mathbb{R}^n}
		\bigg\{f(x)+\sum_{i=1}^{m}\lambda_ig_i(x)\bigg\}.
		\]
		Let
		\[
		f^*:=\inf \{f(x): g_i(x)\leq 0,\ i=1,\ldots,m\}.
		\]
		We consider the following two cases.
		
		\medskip
		\noindent
		{\it Case 1:} $f^*=-\infty$.  
		In this case, $(iv)$ holds trivially.
		
		\medskip
		\noindent
		{\it Case 2:} $f^*>-\infty$.  
		Then $f(x)-f^*\geq 0$ for every $x\in \{x\in \mathbb{R}^n: g_i(x)\leq 0,\ i=1,\ldots,m\}$. 
		By assumption $(i)$, there exist $\bar\lambda_i\geq 0$, $i=1,\ldots,m$, such that
		\[
		f(x)+\sum_{i=1}^{m}\bar\lambda_ig_i(x)\geq f^*,
		\quad \forall x\in \mathbb{R}^n.
		\]
		Consequently,
		\[
		\max_{\lambda\in  \mathbb{R}^m_+}
		\inf_{x\in \mathbb{R}^n}
		\bigg\{f(x)+\sum_{i=1}^{m}\lambda_ig_i(x)\bigg\}
		\geq
		\inf_{x\in \mathbb{R}^n}
		\bigg\{f(x)+\sum_{i=1}^{m}\bar\lambda_ig_i(x)\bigg\}
		\geq f^*.
		\]
		Hence, $(iv)$ follows.
		
		\medskip
		We next prove that $(iv)\Rightarrow(i)$. 
		Suppose, by contradiction, that $(i)$ does not hold. Then there exist a quadratic function
		$\bar f:\mathbb{R}^n\to \mathbb{R}$ and a point $\bar x\in \mathbb{R}^n$ such that
		$\bar f(x)\geq 0$ for every $x\in [g\leq 0]$, but
		\[
		\bar f(\bar x)+\sum_{i=1}^{m}\lambda_ig_i(\bar x)<0
		\quad \text{for all } \lambda=(\lambda_1,\ldots,\lambda_m)\in \mathbb{R}^m_+.
		\]
		This implies that
		\[
		\inf_{x\in \mathbb{R}^n}
		\bigg\{\bar f(x)+\sum_{i=1}^{m}\lambda_ig_i(x)\bigg\}<0,
		\quad \forall \lambda\in \mathbb{R}^m_+,
		\]
		and hence
		\[
		\max_{\lambda\in  \mathbb{R}^m_+}
		\inf_{x\in \mathbb{R}^n}
		\bigg\{\bar f(x)+\sum_{i=1}^{m}\lambda_ig_i(x)\bigg\}<0.
		\]
		On the other hand, since $\bar f(x)\geq 0$ for every $x\in [g\leq 0]$, we have
		\[
		\inf \bigg\{\bar f(x): g_i(x)\leq 0,\ i=1,\ldots,m\bigg\}\geq 0.
		\]
		Therefore,
		\[
		\inf \bigg\{\bar f(x): g_i(x)\leq 0,\ i=1,\ldots,m\bigg\}
		>
		\max_{\lambda\in  \mathbb{R}^m_+}
		\inf_{x\in \mathbb{R}^n}
		\bigg\{\bar f(x)+\sum_{i=1}^{m}\lambda_ig_i(x)\bigg\},
		\]
		which contradicts $(iv)$. Hence, $(i)$ must hold.
		
		\medskip
		$(e)$ According to parts $(a)$ and $(b)$, it suffices to show that $(ii)$ implies $(iii)$. 
		This implication follows directly from \cite[Theorem~3.1]{JeyakumarHuyLi2009}.
		The proof is complete. 
	\end{proof}
	
	\begin{remark}
		According to Theorem~\ref{Theo7} (parts $(c)$ and $(d)$), the strong duality result \eqref{e11} holds under the assumptions that $({\rm H_1})$, $({\rm H_2})$, and Slater’s condition are satisfied. 
		
		For an affine manifold $H\subset \mathbb{R}^n$ and quadratic functions $f$ and $g$, the authors of \cite[Theorem~4]{TuyTuan2013} established the strong duality result \eqref{e11} for the problem
		\begin{equation}\label{e16} 
			\inf \{f(x): x\in H,\ g_1(x)\leq 0,\ g_2(x)\leq 0\}
		\end{equation}
		under the following two conditions:
		\begin{description}
			\item[$(i^\prime)$] There exists $x^*\in H$ such that $g_2(x^*)<0$;
			\item[$(ii^\prime)$] The function
			\[
			\omega(t):=\inf \{f(x)+tg_1(x): x\in H,\ g_2(x)\leq 0\}
			\]
			admits a maximizer $\bar t$ on $\mathbb{R}_+$, and one of the functions $f+\bar t g_1$ or $g_2$ is strictly convex on $H$, while the other is convex on $H$.
		\end{description}
		The following example illustrates an application of Theorem~\ref{Theo7}, whereas the result of \cite[Theorem~4]{TuyTuan2013} is not applicable to this problem.
	\end{remark}

	\begin{example}
		Consider the following problem $(TP)$:
		\begin{equation}\label{e16a} 
			\inf \{f(x)=2x_1x_2: x=(x_1,x_2)\in \mathbb{R}^2,\ x_1^2+x_2^2-1\leq0,\ -x_1\leq 0\}.
		\end{equation}
		Let $g_1(x)=x_1^2+x_2^2-1$ and $g_2(x)=-x_1$. Then,
		\[
		A=\begin{pmatrix} 
			0 & 1\\
			1 & 0 
		\end{pmatrix},\quad
		a=\begin{pmatrix} 
			0\\
			1
		\end{pmatrix},\quad
		b_2=\begin{pmatrix}
			-1\\
			0
		\end{pmatrix},\quad
		x_0=0,\quad
		\bar\lambda=-1.
		\]
		We have
		\begin{align*}
			&\{v\in \mathbb{R}^2: Av-\bar \lambda v=0,\ (a+2\bar \lambda x_0)^Tv\leq0,\ b_2^Tv\leq0\}\\
			=&\{v\in \mathbb{R}^2: v_1+v_2=0,\ v_1\geq 0\}\neq\{0\}.
		\end{align*}
		Hence, condition $({\rm H_3})$ is satisfied and, by Proposition~\ref{Theo4}, conditions $({\rm H_1})$ and $({\rm H_2})$ also hold.
		
		Moreover, Slater’s condition is satisfied since there exists
		$x^*=(\tfrac12,0)\in\mathbb{R}^2$
		such that
		$$
		g_1(x^*)=-\tfrac34<0
		\quad\text{and}\quad
		g_2(x^*)=-\tfrac12<0.
		$$
		According to Theorem~\ref{Theo7} (parts $(c)$ and $(d)$), the strong duality result \eqref{e11} holds, that is,
		\begin{equation*}
			\inf \{f(x): g_1(x)\leq 0,\ g_2(x)\leq 0\}
			=
			\max_{(\lambda_1,\lambda_2)\in \mathbb{R}^2_+}
			\inf_{x\in \mathbb{R}^2}
			\bigg\{f(x)+\lambda_1g_1(x)+\lambda_2g_2(x)\bigg\}.
		\end{equation*}
		
		However, for each $t\geq0$, we have
		\begin{align*}
			\omega(t)
			=&\inf \{f(x)+tg_1(x): g_2(x)\leq 0\}\\
			=&\inf \{2x_1x_2+t(x_1^2+x_2^2-1): x_1\geq 0\}\\
			=&\inf \{(x_1+x_2)^2+(t-1)(x_1^2+x_2^2)-t: x_1\geq0\}.
		\end{align*}
		Consequently,
		\[
		\omega(t)=
		\begin{cases}
			-t, & \text{if } t\geq 1,\\
			-\infty, & \text{if } 0\leq t<1.
		\end{cases}
		\]
		Thus, the function $\omega(t)$ admits a maximizer $\bar t=1$ on $\mathbb{R}_+$. Since both
		\[
		f(x)+\bar t g_1(x)=(x_1+x_2)^2-1
		\quad\text{and}\quad
		g_2(x)=-x_1
		\]
		are not strictly convex, the result in \cite[Theorem~4]{TuyTuan2013} cannot be applied to this problem.
	\end{example}

	As an application of Theorem~\ref{Theo7}, we establish a necessary and sufficient
	condition for global optimality in quadratic programming.
	
	\begin{proposition}\label{pro1}
		Consider the following quadratic programming problem:
		\begin{equation}\tag{$QP$}
			\inf\{f(x): g_i(x)\leq 0,\; i=1, \ldots, m\}.
		\end{equation}
		Suppose that assumptions $({\rm H_1})$ and $({\rm H_2})$ are satisfied, and that there exists
		$x^0\in \mathbb{R}^n$ such that $g_i(x^0)<0$ for every $i=1, \ldots, m$.
		Let $\bar x$ be a feasible point of problem $(QP)$.
		Then, $\bar x$ is a global minimizer of $(QP)$ if and only if there exists
		$\lambda=(\lambda_1,\ldots,\lambda_m)\in \mathbb{R}^m_+$ such that the following conditions hold:
		\begin{description}
			\item{$(i)$} $2(A+\sum_{i=1}^{m}\lambda_iB_i)\bar x+ (a+\sum_{i=1}^{m}\lambda_ib_i)=0$;
			\item{$(ii)$} $\lambda_ig_i(\bar x)=0, \; i=1, \ldots,m$; and,
			\item{$(iii)$} $A+\sum_{i=1}^{m}\lambda_iB_i\succeq 0.$
		\end{description}
	\end{proposition}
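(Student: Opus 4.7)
The plan is to prove both implications separately, with the necessity direction relying on the strong form of the S-lemma established in Theorem~\ref{Theo7}(c).

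For the sufficiency direction, suppose $\lambda\in\mathbb{R}^m_+$ satisfies (i)--(iii). I would set $L(x):=f(x)+\sum_{i=1}^m\lambda_ig_i(x)$ and observe that, because $L$ is quadratic with Hessian $2(A+\sum\lambda_iB_i)$, condition (iii) makes $L$ convex, while condition (i) says $\nabla L(\bar x)=0$. Hence $\bar x$ is a global minimizer of $L$ on $\mathbb{R}^n$. For any feasible $x$, the chain
\[
f(x)\ \geq\ f(x)+\sum_{i=1}^m\lambda_ig_i(x)\ =\ L(x)\ \geq\ L(\bar x)\ =\ f(\bar x)+\sum_{i=1}^m\lambda_ig_i(\bar x)\ =\ f(\bar x)
\]
closes the argument, using $\lambda_i\geq 0$ and $g_i(x)\leq 0$ for the first inequality and (ii) for the last equality.

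For the necessity direction, assume $\bar x$ is a global minimizer and set $\tilde f(x):=f(x)-f(\bar x)$, which is still quadratic. Then $\tilde f(x)\geq 0$ on $[g_i\leq 0,\ i=1,\dots,m]$. Under $({\rm H_1})$ and $({\rm H_2})$, Theorem~\ref{Theo3} guarantees the convexity of ${\rm U}(\tilde f,g_1,\ldots,g_m)$, so Theorem~\ref{Theo7}(c) applies together with Slater's condition to yield the S-lemma certificate: there exist $\lambda_i\geq 0$ with
\[
f(x)+\sum_{i=1}^m\lambda_ig_i(x)\ \geq\ f(\bar x)\qquad\forall x\in\mathbb{R}^n.
\]
Setting $x=\bar x$ forces $\sum\lambda_ig_i(\bar x)\geq 0$; but each summand is $\leq 0$, so $\lambda_ig_i(\bar x)=0$ for every $i$, giving (ii). It also follows that $\bar x$ is a global minimizer of the quadratic $L(x)=f(x)+\sum\lambda_ig_i(x)$ on $\mathbb{R}^n$. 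The vanishing of $\nabla L(\bar x)$ yields (i), and the standard fact that a quadratic attaining its finite minimum has positive semidefinite Hessian yields (iii).

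The routine pieces are the gradient and Hessian identifications for the Lagrangian $L$ and the elementary observation that a sum of nonpositive terms equals zero only when each term vanishes. The main obstacle, and the only substantive ingredient, is locating and invoking the correct form of the S-lemma: specifically, combining Theorem~\ref{Theo3} (to obtain convexity of the joint image ${\rm U}$ from $({\rm H_1})$ and $({\rm H_2})$) with Theorem~\ref{Theo7}(c) to produce the multipliers $\lambda_i$. Once this certificate is in hand, (i)--(iii) follow by inspection.
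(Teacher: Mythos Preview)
Your proof is correct and follows essentially the same approach as the paper's own proof: both directions proceed identically, using the S-lemma certificate from Theorem~\ref{Theo7}(c) under $({\rm H_1})$, $({\rm H_2})$, and Slater's condition to obtain the Lagrangian lower bound, then reading off (i)--(iii) from first- and second-order optimality of the unconstrained Lagrangian. Your explicit mention of Theorem~\ref{Theo3} is redundant since Theorem~\ref{Theo7}(c) already incorporates it, and the check that $({\rm H_1})$ and $({\rm H_2})$ transfer to $\tilde f=f-f(\bar x)$ (which depends only on the quadratic and linear parts) is implicit but harmless.
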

	\begin{proof}
		According to Theorem~\ref{Theo7}, $\bar x$ is a global minimizer of $(QP)$ if and only if there exists
		$\lambda=(\lambda_1,\ldots,\lambda_m)\in \mathbb{R}^m_+$ such that
		\begin{equation}\label{e12}
			f(x)+\sum_{i=1}^{m}\lambda_i g_i(x)\geq f(\bar x), \qquad \forall x\in \mathbb{R}^n.
		\end{equation}
		From this inequality and the fact that $\bar x$ is a feasible point of $(QP)$, it follows that
		\begin{equation}\label{e13}
			\sum_{i=1}^{m}\lambda_i g_i(\bar x)=0.
		\end{equation}
		Since $\lambda_i\geq 0$ and $g_i(\bar x)\leq 0$ for every $i=1,\ldots,m$, we obtain
		\begin{equation}\label{e14}
			\lambda_i g_i(\bar x)=0, \qquad i=1,\ldots,m.
		\end{equation}
		
		Define
		$$\varphi(x):=f(x)+\sum_{i=1}^{m}\lambda_i g_i(x)-f(\bar x).$$
		Since $\bar x$ is a global minimizer of the unconstrained problem
		$\inf\{\varphi(x): x\in \mathbb{R}^n\}$, it follows that
		$$\nabla \varphi(\bar x)=0
		\quad \text{and} \quad
		\nabla^2 \varphi(\bar x)\succeq 0,$$
		that is,
		$$2\bigg(A+\sum_{i=1}^{m}\lambda_i B_i\bigg)\bar x
		+\bigg(a+\sum_{i=1}^{m}\lambda_i b_i\bigg)=0$$
		and
		$$A+\sum_{i=1}^{m}\lambda_i B_i\succeq 0.$$
		
		Conversely, suppose that conditions $(i)$--$(iii)$ are satisfied.
		Then $\bar x$ is a global minimizer of the convex quadratic programming problem
		$\inf\{\varphi(x): x\in \mathbb{R}^n\}$.
		Consequently, for all $x\in \mathbb{R}^n$, we have
		\begin{align*}
			f(x)
			&\geq f(x)+\sum_{i=1}^{m}\lambda_i g_i(x) \\
			&\geq f(\bar x)+\sum_{i=1}^{m}\lambda_i g_i(\bar x)
			= f(\bar x),
		\end{align*}
		which shows that $\bar x$ is a global minimizer of $(QP)$.
	\end{proof}

	Applying Theorem~\ref{Theo7} and Proposition~\ref{pro1} to problem $(TP)$,
	we obtain the following result, which extends
	\cite[Theorem~4.1 and Corollary~4.1]{JeyakumarLi2014}.
	
	\begin{corollary}\label{cor2}
		Consider problem $(TP)$.
		Suppose that Slater's condition is satisfied.
		If at least one of the assumptions ${\rm (H_3)}$ or ${\rm (H_4)}$ holds, then the following statements are true:
		\begin{description}
			\item[$(i)$]
			A feasible point $\bar x$ of $(TP)$ is a global minimizer of $(TP)$ if and only if there exists
			$\lambda=(\lambda_1,\ldots,\lambda_m)\in \mathbb{R}^m_+$ such that
			$$2(A+\lambda_1 I_n)\bar x
			+\big(a-2\lambda_1 x_0+\sum_{i=2}^{m}\lambda_i b_i\big)=0,$$
			$$\lambda_1\big(\|\bar x-x_0\|^2-\alpha\big)=0,\qquad
			\lambda_i\big(b_i^T\bar x-\beta_i\big)=0,\; i=2,\ldots,m,$$
			and
			$$A+\lambda_1 I_n\succeq 0.$$
			
			\item[$(ii)$]
			Strong duality holds for $(TP)$, that is,
			\begin{align*}
				&\inf \Big\{x^TAx+a^Tx:
				\|x-x_0\|^2-\alpha\leq0,\;
				b_i^Tx-\beta_i\leq0,\ i=2,\ldots,m\Big\} \\
				=&\max_{\lambda\in \mathbb{R}^m_+}
				\inf_{x\in \mathbb{R}^n}
				\bigg\{
				x^TAx+a^Tx
				+\sum_{i=2}^{m}\lambda_i(b_i^Tx-\beta_i)
				\bigg\}.
			\end{align*}
		\end{description}
	\end{corollary}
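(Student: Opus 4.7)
The plan is to apply Proposition~\ref{pro1} and Theorem~\ref{Theo7} directly to problem $(TP)$, after using Propositions~\ref{Theo4} and~\ref{Theo6} to secure the hypotheses. No new ideas are required; the entire argument is bookkeeping, matching the specific data of $(TP)$ against the general template $(f, g_1, \ldots, g_m)$.

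First I would verify that $({\rm H_1})$ and $({\rm H_2})$ hold for $(TP)$. If $({\rm H_3})$ is assumed, Proposition~\ref{Theo4} yields both (with $i_0 = 1$, $B_1 = I_n$, and $\lambda = -\bar\lambda$). If instead $({\rm H_4})$ is assumed, Proposition~\ref{Theo6} first upgrades it to $({\rm H_3})$, after which Proposition~\ref{Theo4} applies as before. In either case, combined with the standing Slater hypothesis, all hypotheses of Proposition~\ref{pro1} and of Theorem~\ref{Theo7}(c)--(d) are satisfied for the data $(f, h_1, \ldots, h_m)$ of $(TP)$.

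For part~$(i)$, I would invoke Proposition~\ref{pro1} after matching
$$h_1(x) = \|x - x_0\|^2 - \alpha = x^T I_n x + (-2 x_0)^T x + (\|x_0\|^2 - \alpha)$$
and $h_i(x) = b_i^T x - \beta_i$ for $i \geq 2$ against the template $x^T B_i x + b_i^T x + \beta_i$. This gives $B_1 = I_n$ with linear part $-2 x_0$, and $B_i = 0$ for $i \geq 2$. Substituting into the three conditions of Proposition~\ref{pro1}, the gradient equation collapses to $2(A + \lambda_1 I_n)\bar x + (a - 2\lambda_1 x_0 + \sum_{i=2}^m \lambda_i b_i) = 0$, complementary slackness becomes $\lambda_1(\|\bar x - x_0\|^2 - \alpha) = 0$ together with $\lambda_i(b_i^T \bar x - \beta_i) = 0$ for $i \geq 2$, and the positive semidefiniteness condition reduces to $A + \lambda_1 I_n \succeq 0$. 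The only subtlety is tracking the cross term that produces $-2\lambda_1 x_0$ in the gradient equation.

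For part~$(ii)$, strong duality follows immediately from parts (c) and (d) of Theorem~\ref{Theo7}: $({\rm H_1})$, $({\rm H_2})$, and Slater's condition together imply the S-lemma via~(c), and~(d) then converts the S-lemma into the strong duality equality. Expanding the Lagrangian $f(x) + \sum_{i=1}^m \lambda_i h_i(x)$ with the explicit expressions for $h_1$ and $h_i$ yields the displayed maximin formula (the inner infimum should retain the $\lambda_1(\|x - x_0\|^2 - \alpha)$ term for consistency with the general statement). There is no genuine obstacle in the proof; the only care required is to keep signs and constants straight during the substitution.
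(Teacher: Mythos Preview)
Your proposal is correct and follows essentially the same route as the paper's own proof: invoke Propositions~\ref{Theo4} and~\ref{Theo6} to obtain $({\rm H_1})$ and $({\rm H_2})$, then apply Proposition~\ref{pro1} for part~$(i)$ and Theorem~\ref{Theo7}(c),(d) for part~$(ii)$. Your parenthetical remark that the displayed dual in part~$(ii)$ ought to include the term $\lambda_1(\|x-x_0\|^2-\alpha)$ is also a valid observation about the statement as printed.
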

	
	\begin{proof}
		Since at least one of the assumptions ${\rm (H_3)}$ or ${\rm (H_4)}$ holds,
		it follows from Propositions~\ref{Theo4} and~\ref{Theo6} that the set
		${\rm U}(f,h_1,\ldots,h_m)$ is convex.
		Under Slater's condition, applying Theorem~\ref{Theo7} together with
		Proposition~\ref{pro1} to problem $(TP)$ yields the desired conclusions.
	\end{proof}

	\section{Conclusions}
	In this paper, we establish the hidden convexity of quadratic systems under assumptions ${\rm (H_1)}$ and ${\rm (H_2)}$. The hidden convexity of the trust-region problem with linear inequality constraints $(TP)$ is further investigated under assumptions ${\rm (H_3)}$ and ${\rm (H_4)}$. The newly proposed assumptions are compared with existing ones, showing that they are strictly weaker. We provide a complete proof of the hidden convexity for systems involving two quadratic functions. Necessary and sufficient conditions for the S-lemma associated with systems of quadratic inequalities are also derived. Moreover, under these weaker assumptions, we establish necessary and sufficient global optimality conditions as well as strong duality results for quadratic programming problems.

	\section*{Acknowledgments}
	The research of the first author is funded by the Ministry of Education and Training of Vietnam [grant number B.2024-SP2-04].

\end{document}